\documentclass[12pt]{article}
\usepackage[a4paper,margin=27mm]{geometry}
\usepackage[T1]{fontenc}
\usepackage{amsmath,amssymb,amsthm,amscd,amsfonts}
\usepackage{times,microtype}
\usepackage{needspace}
\usepackage[hyperfootnotes=false,hidelinks]{hyperref}
\hypersetup{pdftitle={Dirichlet eigenvalue asymptotics on conformally compact Einstein manifolds},pdfauthor={Xiaoshang Jin}}
\numberwithin{equation}{section}
\newtheorem{theorem}{Theorem}[section]
\newtheorem{proposition}[theorem]{Proposition}
\newtheorem{lemma}[theorem]{Lemma}

\theoremstyle{remark}
\newtheorem{remark}[theorem]{Remark}
\newcommand{\dd}{\,d}
\newcommand{\dv}{\,dv}
\newcommand{\Ric}{\operatorname{Ric}}
\newcommand{\tr}{\operatorname{tr}}

\newcommand{\sgn}{\operatorname{sgn}}

\newcommand{\E}{Q}

\newcommand{\radialaverage}{\frac{\pi^2(1-e^{-2\ell})}{2\ell(\pi^2+\ell^2)}}
\title{Dirichlet eigenvalue asymptotics on conformally compact Einstein manifolds}
\author{Xiaoshang Jin\thanks{The author was supported by the Fundamental Research Funds for the Central Universities, HUST (No.\ 2025BRSXB002), and the National Natural Science Foundation of China (Grant No.\ 12471054).}}
\date{}
\begin{document}
\maketitle
\begin{abstract}
We study the first Dirichlet eigenvalues of large geodesic balls and fixed-width boundary layers on conformally compact Einstein manifolds of dimension $n+1.$ For large balls, we determine the exponential rate of convergence when the spectral bottom is less than $\frac{n^2}{4},$ i.e. the bottom of the continuous spectrum. When it equals the bottom of the continuous spectrum, the second coefficient has two possible values, according to whether the continued resolvent is regular or has a pole there. We also give examples with negative Yamabe conformal infinity realizing both threshold alternatives and construct metrics with a Ricci lower bound that have other second coefficients.

Under nonnegative Yamabe conformal infinity, we also find the exact second coefficient for the nonlinear $p$-Laplacian on large balls.

For layers defined by a geodesic boundary defining function, we compute the first correction and show that its sign characterizes nonnegative Yamabe conformal infinity.
\end{abstract}
\section{Introduction}
Let $\overline X^{n+1}$ be a compact smooth manifold of dimension $n+1$ with non-empty boundary $M$, and let $X=X^{n+1}$ be its interior. A complete Einstein metric $g^+$ on $X$, normalized by $\Ric(g^+)=-ng^+$, is called $C^{m,\alpha}$ conformally compact Einstein (CCE) if there is a smooth defining function $\rho$ satisfying
\[
 \rho>0\ \text{in }X,\qquad \rho=0\ \text{on }M,\qquad d\rho\ne0\ \text{on }M,
\]
such that $g=\rho^2g^+\in C^{m,\alpha}(\overline X)$ is a nondegenerate metric, where $m\ge0$ is an integer and $\alpha\in[0,1].$ The conformal class $[h]$ of $h=g|_{TM}$ is independent of $\rho$ and is called the conformal infinity. If the compactification is $C^2$, the Einstein equation gives $|d\rho|^2_g=1$ on $M$. The sectional curvatures of $g^+$ then tend to $-1$, so $g^+$ is asymptotically hyperbolic (AH). Classical accounts and constructions include Fefferman--Graham~\cite{FG}, Graham--Lee~\cite{GL}, Graham~\cite{Graham}.

CCE metrics provide a geometric setting for the AdS/CFT correspondence, which relates gravity in the interior to conformal field theory on the boundary; see Maldacena~\cite{Maldacena}. In this setting, Witten--Yau~\cite{WY} showed how positive scalar curvature at conformal infinity constrains the topology of the filling. Rigidity results identify conditions under which a filling with round spherical conformal infinity must be hyperbolic; see Qing~\cite{Qing}, Dutta--Javaheri~\cite{DJ}, Li--Qing--Shi~\cite{LQS} and Chen--Lai--Wang~\cite{CLW}. The existence problem asks which boundary conformal classes admit CCE fillings. Graham--Lee~\cite{GL} and Lee~\cite{LeeFredholm} established perturbative existence results, while Anderson~\cite{AndersonExistence} studied global existence in dimension four. Gursky--Han~\cite{GH} obtained obstructions to the existence of fillings with prescribed conformal infinity. Compactness and uniqueness under suitable geometric and topological assumptions were studied by Chang--Ge~\cite{CG}, Chang--Ge--Qing~\cite{CGQ} and Chang--Ge--Jin--Qing~\cite{CGJQ}. These works show how the conformal geometry of the boundary influences the geometry of the Einstein interior.

For a smooth AH metric $g^+,$ the spectral results of Mazzeo and Mazzeo--Melrose~\cite{Mazzeo91,MM} give
\begin{equation}\label{eq:spectrum}
 \sigma(-\Delta_{g^+})=\sigma_{\mathrm{pt}}(X)\cup\left[\frac{n^2}{4},\infty\right),
 \qquad \sigma_{\mathrm{pt}}(X)\subset\left(0,\frac{n^2}{4}\right),
\end{equation}
where $\sigma_{\mathrm{pt}}(X)$ is the finite set of $L^2$ eigenvalues, each of finite multiplicity. With the convention $\Delta=\operatorname{div}\nabla$, the spectral bottom is
\begin{equation}\label{eq:bottom}
 \lambda_1(X)=\inf\sigma(-\Delta_{g^+})
 =\inf\limits_{0\ne u\in C_c^\infty(X)}
 \frac{\int_X|du|_{g^+}^2\,dv_{g^+}}{\int_Xu^2\,dv_{g^+}}.
\end{equation}
For any increasing smooth relatively compact exhaustion $\Omega_j\nearrow X$, domain monotonicity and compactly supported test functions give
\begin{equation}\label{eq:exhaustion}
 \lambda_1(X)=\lim_{j\to\infty}\lambda_1(\Omega_j)
 =\lim_{R\to\infty}\lambda_1(B(o,R)).
\end{equation}
Here $\lambda_1(\Omega)$ is the first Dirichlet eigenvalue of $-\Delta_{g^+}$ on a relatively compact open set $\Omega\subset X$, defined variationally by
\[
 \lambda_1(\Omega)=\inf\limits_{0\ne u\in H_0^1(\Omega)}
 \frac{\int_\Omega|du|_{g^+}^2\,dv_{g^+}}{\int_\Omega u^2\,dv_{g^+}}.
\]
This definition also applies to nonsmooth balls and layers, with $H_0^1(\Omega)$ defined as the $H^1$ closure of $C_c^\infty(\Omega)$.
For $n\ge3$, the Yamabe constant of the conformal infinity is
\begin{equation}\label{eq:Yamabe-definition}
 Y(M,[h])=\inf_{0\ne u\in H^1(M)}
 \frac{\displaystyle\int_M\left(\frac{4(n-1)}{n-2}|du|_h^2+R_hu^2\right)\,dv_h}
 {\displaystyle\left(\int_M|u|^{\frac{2n}{n-2}}\,dv_h\right)^{\frac{n-2}{n}}},
\end{equation}
Here $R_h$ is the scalar curvature of $h$. The Yamabe constant depends only on $[h]$~\cite{LP}. In dimension $n=2$, nonnegative Yamabe infinity means that each boundary component admits a conformal representative of nonnegative scalar curvature.
Lee~\cite{Lee} proved that for a smoothly conformally compact Einstein manifold
$$
Y(M,[h])\ge0\quad\Rightarrow\quad\sigma_{\mathrm{pt}}(X)=\varnothing\quad\Leftrightarrow\quad\lambda_1(X)=\frac{n^2}{4}.
$$ This links a conformal inequality on the boundary to the spectral geometry of the Riemannian interior. Wang~\cite{WangLee} gave another proof of Lee's theorem.

For geodesic balls in hyperbolic space, Savo~\cite{Savo} obtained the two-term expansion below. The author extended it in \cite{Jin} to CCE manifolds with nonnegative Yamabe infinity:
\begin{equation}\label{eq:Jin}
 Y(M,[h])\ge0\quad\Longrightarrow\quad
 \lambda_1(B(o,R))=\frac{n^2}{4}+\frac{\pi^2}{R^2}+O(R^{-3})
 \qquad(R\longrightarrow\infty),
\end{equation}
for every fixed $o\in X$.

To distinguish the possible behavior at the continuous-spectrum threshold, consider the resolvent
\begin{equation}\label{eq:resolvent-definition}
 \mathbf{R}(s)=\bigl(-\Delta_{g^+}-s(n-s)\bigr)^{-1}.
\end{equation}
The spectral parameter $s$ is complex, with spectral value $s(n-s)$. For real $s>n$, this value is negative and \eqref{eq:resolvent-definition} is the bounded $L^2(X)$ inverse. More generally, on the physical half-plane $\operatorname{Re}s>\frac n2$ it is the $L^2$ resolvent wherever $s(n-s)\notin\sigma(-\Delta_{g^+})$. Its poles in this half-plane correspond to the discrete eigenvalues in \eqref{eq:spectrum}. Write
\[
 P=-\Delta_{g^+}-\frac{n^2}{4},\qquad
 s=\frac n2+r,\qquad
 \mathbf R\left(\frac n2+r\right)=(P+r^2)^{-1}.
\]
Mazzeo--Melrose~\cite{MM} continue this family meromorphically across $r=0$ on compactly supported smooth functions. The continued operators take values in $C^\infty_{\mathrm{loc}}(X)$ and suitable weighted spaces. We also refer to Graham--Zworski~\cite{GZ} for the corresponding scattering theory. This continuation is distinct from the $L^2$ inverse: $\frac{n^2}{4}$ belongs to the continuous spectrum, so $P^{-1}$ is not a bounded operator on $L^2(X)$. Near $r=0$, the continued family is analytic apart from possible isolated poles. Guillarmou--Qing~\cite[Lemma 3.2]{GQ} identify its behavior at $r=0$ with solutions of $Pu=0$ having the specified regular boundary decay. The boundary condition is made explicit below.

The large-ball behavior depends first on whether the spectral bottom is an isolated eigenvalue or the bottom of the continuous spectrum. In the latter case a further distinction is made by the continued resolvent.

\begin{theorem}\label{thm:balls}
Let $(X^{n+1},g^+)$ be connected and smoothly conformally compact Einstein, with $n\ge3$. Fix $o\in X$.
\begin{enumerate}
\item[\textup{(i)}] If $\lambda_1(X)<\frac{n^2}{4}$, set $\kappa=\sqrt{\frac{n^2}{4}-\lambda_1(X)}>0$, then there are constants $c_o,C_o>0$ such that, for all sufficiently large $R$,
\begin{equation}\label{eq:ball-exponential-estimate}
 c_o e^{-2\kappa R}\le\lambda_1(B(o,R))-\lambda_1(X)
 \le C_o e^{-2\kappa R}.
\end{equation}
\item[\textup{(ii)}] If $\lambda_1(X)=\frac{n^2}{4}$, then the continued resolvent $\mathbf R(s)$ is either analytic or has a simple pole at $s=\frac{n}{2}$, and
\begin{equation}\label{eq:ball-dichotomy}
 \begin{aligned}
 \mathbf{R}\text{ analytic at }\frac{n}{2}
 &\Longleftrightarrow\lambda_1(B(o,R))=
 \frac{n^2}{4}+\frac{\pi^2}{R^2}+O(R^{-3}),\\[6pt]
 \mathbf{R}\text{ has a pole at }\frac{n}{2}
 &\Longleftrightarrow\lambda_1(B(o,R))=
 \frac{n^2}{4}+\frac{\pi^2}{4R^2}+O(R^{-3}).
 \end{aligned}
\end{equation}
\end{enumerate}
\end{theorem}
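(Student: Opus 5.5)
The argument runs through the geodesic boundary defining function $x$, the radial structure of $g^+=x^{-2}(dx^2+h_x)$, and the comparison $r_o=-\log x+O(1)$, where $r_o$ is distance from $o$. Balls are then comparable to the sublevel sets $\{x>c\,e^{-R}\}$. Writing $t=-\log x$, the volume density behaves like $e^{nt}(1+O(e^{-2t}))$. The conjugation $u=e^{-nt/2}v$ turns $P$ into $-\partial_t^2+O(e^{-2t})\,\partial$-terms plus the tangential Laplacian, which is exponentially small. So near infinity the problem is a half-line Schr\"odinger problem with an exponentially decaying perturbation, and the eigenvalue asymptotics come from one-dimensional analysis combined with a global ground state or threshold solution.

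For (i), let $\phi>0$ be the $L^2$ ground state of $\lambda_1(X)$. It lies below the continuous spectrum, so by Mazzeo's indicial root analysis $\phi\sim x^{n/2+\kappa}\,a(y)$ with $a>0$ (positivity from Hopf). For the upper bound, cut off $\phi$ near $\partial B(o,R)$ with a cutoff of unit width. The Rayleigh quotient error is the energy in the shell, which is $\asymp \int_{R-1}^{R} e^{-(n+2\kappa)t}e^{nt}\,dt\asymp e^{-2\kappa R}$. For the lower bound, let $u_R$ be the Dirichlet eigenfunction and use Green's identity against $\phi$:
\[
(\lambda_1(B_R)-\lambda_1(X))\int_{B_R}u_R\phi=-\int_{\partial B_R}\phi\,\partial_\nu u_R .
\]
Normalize $u_R\to\phi$ locally. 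Boundary Harnack on the exponential shell gives $|\partial_\nu u_R|\gtrsim \phi$ on $\partial B_R$, so the right side is $\gtrsim e^{-(n+2\kappa)R}e^{nR}$. Domain monotonicity keeps everything uniform.

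For (ii), the first step is the dichotomy itself. Mazzeo--Melrose continuation together with Guillarmou--Qing Lemma 3.2 show that a pole at $r=0$ corresponds to a nonzero solution of $Pu=0$ with $u=O(x^{n/2})$ and no $x^{n/2}\log x$ term, a threshold resonance. Since $\lambda_1=n^2/4$ means there is no $L^2$ eigenvalue, the pole order is bounded by the structure $(P+r^2)^{-1}$ near $r=0$: writing the residue as $\Pi/r^2$ would force an $L^2$ eigenfunction. The pole is therefore at most simple in $s$. In the conjugated variable, the resonance corresponds to a half-line solution $v\to a(y)\neq0$ with Neumann-type behavior at infinity. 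In the regular case the threshold solution grows like $v\sim t$, which is Dirichlet-type behavior at the inner end.

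The asymptotics then follow from the 1D model. On $[0,R]$, Dirichlet conditions at both ends give $\lambda_1=\pi^2/R^2$, while effectively Neumann at $t=0$ and Dirichlet at $R$ give $\pi^2/(4R^2)$. For the upper bounds, use the trial functions $u=e^{-nt/2}\sin(\pi t/R)\,w$ (respectively $e^{-nt/2}u_0\cos(\pi t/2R)$, with $u_0$ the resonant state) glued to the interior. The exponential decay of the perturbation makes the error $O(R^{-3})$. For the lower bounds, again apply Green's identity, this time with the generalized threshold solution $\psi$ ($\psi\sim x^{n/2}\log(1/x)$ in the regular case, $\psi\sim x^{n/2}$ in the resonant case, positive by Agmon--Allegretto theory at criticality). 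Combine this with a separation into the $h$-averaged radial mode and a tangential remainder, which is controlled by the positive spectral gap of the transversal part.

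The converse implications are automatic, since exactly one alternative holds and the two coefficients differ. The main obstacle is the lower bound in (ii) with an $O(R^{-3})$ error. It requires uniform control of $u_R$ in the transition region, where the interior geometry and the nearly 1D end interact. I would first try a Prüfer-angle or matching argument: match $u_R$ to $\psi$ at $t=T$ fixed, with the exponentially small coupling supplying the error terms.
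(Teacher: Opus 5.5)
\paragraph{Main gap: the sharp lower bounds in (ii).} You locate the obstacle correctly, but $\lambda_1(X_R)-\frac{n^2}{4}\ge\frac{c(X)}{R^2}-CR^{-3}$ is not proved, and the tools you suggest would not give it.
\begin{itemize}
\item Green's identity against a threshold solution $\psi$ yields only the exact relation $(\lambda_1(X_R)-\frac{n^2}{4})\int_{X_R}u_R\psi=-\int_{\partial X_R}\psi\,\partial_\nu u_R$. Extracting $\pi^2$ or $\frac{\pi^2}{4}$ from it with relative error $O(R^{-1})$ requires knowing the flux-to-mass ratio of $u_R$ to that precision, which is what you are trying to prove.
\item A Pr\"ufer or matching argument needs an exactly one-dimensional problem, but the boundary coefficient and the collar metric depend on $y$.
\item The non-radial remainder is not controlled by a transversal spectral gap on most of the collar. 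The tangential operator there is $e^{-2t}\Delta_{h_x}$, so its gap is $O(e^{-2t})$, which is negligible against $R^{-2}$ once $t\gtrsim\log R$.
\end{itemize}

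\paragraph{The missing idea.} The paper uses a ground-state transform by a positive $\phi=x^{\frac n2}F$ with $F\in C^\infty(\overline X)$ and $b=F|_M>0$ everywhere on $M$. In the pole case $\phi=Af$ with $A$ the residue; in the regular case $\phi=\mathbf R(\frac n2)f$ for a fixed $0\le f\in C_c^\infty(X)$, $f\not\equiv0$. This $\phi$ is bounded after conjugation, not $\sim t$: in the regular case the Dirichlet-type effect comes from the core potential $W=f/\phi\ge0$, not from a logarithmically growing solution. Strict positivity of $b$ needs its own argument, for instance lifting $F(|z|,y)$ to $z\in\mathbb R^2$ and applying the strong minimum principle.

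Writing $u=\phi v$ turns $\lambda_1(X_R)-\frac{n^2}{4}$ into the infimum of $\bigl(\int|dv|^2d\mu+\int_KWv^2d\mu\bigr)/\int v^2d\mu$, where $d\mu=\phi^2dv_{g^+}=b^2(1+O(e^{-2t}))\,dt\,dv_h$ on the collar. The tangential energy is now nonnegative and can be dropped, so no mode decomposition is needed. After a fiberwise change of radial variable that makes the density exactly $b^2$, each fiber is a one-dimensional Dirichlet problem on an interval of length $\le R+C$. Poincar\'e and trace estimates on the fixed core supply the inner condition:
\begin{itemize}
\item in the pole case it is Neumann-type, and even reflection with Wirtinger gives $\frac{\pi^2}{4(R+C)^2}$;
\item in the regular case it is Robin-type, and extending linearly to a Dirichlet point at a fixed negative distance gives $\frac{\pi^2}{(R+C)^2}$.
\end{itemize}
The $O(R^{-3})$ error then follows directly from the shift $R\mapsto R+C$.

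\paragraph{A smaller gap in (i).} Boundary Harnack alone does not give $|\partial_\nu u_R|\gtrsim\phi$ on the outer boundary. You would also need a uniform lower bound $u_R\gtrsim\phi$ out to unit distance from that boundary, and $\partial B(o,R)$ need not be smooth. The spectral gap $\eta$ above $\lambda_1(X)$ gives the lower bound directly: for normalized $v\in H_0^1(B(o,R))$,
\[
\int|dv|^2-\lambda_1(X)\ge\eta\bigl(1-\langle v,\phi\rangle^2\bigr)\ge\eta\int_{X\setminus B(o,R)}\phi^2 .
\]
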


The decay exponent in part~\textup{(i)} and the second coefficient in part~\textup{(ii)} are independent of the center. The constants $c_o,C_o$ and those in the remainders may depend on the fixed center $o$.

Choose a smooth geodesic defining function $x$ and put
\[
 P=-\Delta_{g^+}-\frac{n^2}{4},\qquad
 N_0=\{u\in x^{\frac{n}{2}}C^\infty(\overline X):Pu=0\}.
\]
Membership in $x^{\frac{n}{2}}C^\infty(\overline X)$ is independent of the smooth defining function. It is shown in \cite[Lemma 3.2]{GQ} that $\mathbf{R}(s)$ is analytic at $\frac{n}{2}$ exactly when $N_0=\{0\}$. A nonzero element of $N_0$ will be called a \emph{threshold resonant state}. This boundary condition distinguishes it from an arbitrary solution of $Pu=0$.

Guillarmou--Qing~\cite[Theorem 3.4]{GQ} also proved that $Y(M,[h])\ge0$ implies regularity at $\frac{n}{2}$. Together with Lee's theorem, Theorem~\ref{thm:balls}\textup{(ii)} therefore recovers \eqref{eq:Jin}. Their Theorem 1.1 characterizes $Y>0$ by the largest real scattering pole being below $\frac{n}{2}-1$. This condition is stronger than regularity at $\frac{n}{2}$. The large-ball alternative is not a statement of \cite{GQ}. When $\lambda_1(X)<\frac{n^2}{4}$, threshold regularity does not replace the isolated-eigenvalue behavior in part~\textup{(i)}.

\begin{remark}\label{prop:counterexample}
\begin{enumerate}
\item As an application of Theorem~\ref{thm:balls}, for every integer $n\ge3$ there is a smooth conformally compact hyperbolic manifold with $Y(M,[h])<0$ and
\begin{equation}\label{eq:counterexample}
 \lambda_1(B(o,R))=\frac{n^2}{4}+\frac{\pi^2}{R^2}+O(R^{-3}).
\end{equation}
Thus the converse of \eqref{eq:Jin} fails. These examples have positive solutions of $Pu=0$ by \cite[Theorem 2.1]{Sullivan}, but no nonzero solution in $x^{\frac{n}{2}}C^\infty(\overline X)$. Section~\ref{sec:examples} proves the assertion by applying Theorem~\ref{thm:balls} and also gives an explicit negative-Yamabe example with a threshold resonant state, realizing the other coefficient $\frac{\pi^2}{4}$.

\item For a general complete $(n+1)$-manifold $(X,g)$ with $\Ric_g\ge-ng$ and $\lambda_1(X)=\frac{n^2}{4}$, comparison gives
\[
 0\le\lambda_1(B(o,R))-\frac{n^2}{4}\le\frac{\pi^2}{R^2}+O(R^{-3}).
\]
Hence $R\bigl(\lambda_1(B(o,R))-\frac{n^2}{4}\bigr)\to0$. The $R^{-2}$ coefficient, when it exists, need not have either value in Theorem~\ref{thm:balls}. In Section~\ref{sec:ricci-examples}, we construct complete metrics in every dimension $n+1\ge3$ with sectional curvature $\ge-1$ and spectral bottom $\frac{n^2}{4}$. Their second coefficients fill a continuous interval of other values. These examples are not Einstein.
\end{enumerate}
\end{remark}
~\\
The second part of the paper concerns the nonlinear $p$-Laplacian on large geodesic balls.

For $1<p<\infty$, the principal $p$-eigenvalue is defined by
\[
 \lambda_{1,p}(X)=\inf\limits_{0\ne u\in C_c^\infty(X)}
 \frac{\int_X|du|_{g^+}^p\,dv_{g^+}}{\int_X|u|^p\,dv_{g^+}}.
\]
For a relatively compact open set $\Omega$, the same quotient with $u\in C_c^\infty(\Omega)$ defines its first Dirichlet eigenvalue $\lambda_{1,p}(\Omega)$. Hijazi, Montiel and Raulot~\cite[Theorem 9]{HMR} extended Lee's conclusion to this eigenvalue: for smoothly conformally compact Einstein manifolds of dimension $n+1,$
\begin{equation}\label{eq:p-bottom}
 Y(M,[h])\ge0\quad\Longrightarrow\quad\lambda_{1,p}(X)=\left(\frac{n}{p}\right)^p,\qquad 1<p<\infty.
\end{equation}
Their proof uses the characterization of nonnegative Yamabe infinity by the Cheeger constant being $n$. In \cite[Proposition 2.5]{Jinp} the author used Lee's positive eigenfunction and a divergence estimate to provide a new proof of \eqref{eq:p-bottom}. P\'erez-Ayala and Tyrrell~\cite[Theorem 1.2]{PT} proved the upper bound $\lambda_{1,p}(X)\le\left(\frac{n}{p}\right)^p$ for general AH manifolds and studied related estimates for their submanifolds. The author also obtained two-sided bounds in \cite[Theorem 1.4]{Jinp} for the second term of the large-ball $p$-eigenvalue. These bounds have different coefficients when $p\ne2.$ As an improvement, the following theorem identifies the coefficient exactly.

\begin{theorem}\label{thm:p-balls}
Let $(X^{n+1},g^+)$ be a $C^3$ conformally compact Einstein manifold, where $n\ge2$, with $Y(M,[h])\ge0$. Then for every fixed $p>1$ and $o\in X$,
\begin{equation}\label{eq:p-ball-expansion}
 \lambda_{1,p}(B(o,R))
 =\left(\frac np\right)^p
 +\frac{2(p-1)}p\left(\frac np\right)^{p-2}
  \frac{\pi^2}{R^2}+O(R^{-3})
 \qquad(R\longrightarrow\infty).
\end{equation}
\end{theorem}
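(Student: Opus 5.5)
We prove matching upper and lower bounds, each reduced to a one-dimensional radial $p$-Laplace problem. The model is the hyperbolic radial operator $(|\phi'|^{p-2}\phi')'+n|\phi'|^{p-2}\phi'$ on an interval of length $R$. For the linear case $p=2$, the substitution $\phi=e^{-nt/2}\psi$ gives $-\psi''=(\lambda-\frac{n^2}{4})\psi$ and hence $\frac{\pi^2}{R^2}$. For general $p$ we linearize the $p$-Laplacian about the exponential profile $e^{-nt/p}$. We write $u=e^{-nt/p}(1+\varepsilon w)$, or equivalently $\phi'=-\frac np e^{-nt/p}(1+\eta)$, and expand $|\phi'|^{p-2}\phi'$ to second order in the perturbation. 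The effective one-dimensional operator is then $-(p-1)(\frac np)^{p-2}w''$. After normalization, the Dirichlet problem $w(0)=w(R)=0$ yields the coefficient $\frac{2(p-1)}{p}(\frac np)^{p-2}\pi^2/R^2$. The factor $\frac{2}{p}$ comes from the relation between the $p$-Rayleigh quotient and the quadratic form once the phase $e^{-nt/p}$ is factored out. We verify the constant by an explicit computation on the interval.

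\emph{Upper bound.} Let $x$ be a geodesic defining function and $r=-\log x$, which differs from $d(o,\cdot)$ by a bounded amount plus an error $O(e^{-2r})$ near infinity. On the shell $\{T_0<r<T_0+R-C\}\subset B(o,R)$ we take the test function $u=e^{-nr/p}\sin(\pi(r-T_0)/(R-C))$, after the radial adjustment above. The volume form is $x^{-n-1}(1+O(x^2))\,dv_h\,dx$, and $|dr|_{g^+}=1$ exactly. These facts reduce the Rayleigh quotient to the one-dimensional model with relative errors $O(e^{-2T_0})$. To absorb these errors we take $T_0\approx \frac32\log R$, and the cost of shortening the interval by $O(\log R)$ is within $O(R^{-3}\log R)$. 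To get a true $O(R^{-3})$ we instead take $T_0$ fixed and bound the error terms by $\int e^{-2r}(\ldots)$ weighted against $\sin^2$. Near $r=T_0$ the profile vanishes quadratically, so the errors contribute only $O(R^{-3})$. This part mirrors \cite{Jin}, and the $C^3$ regularity is enough for the expansion of $g$.

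\emph{Lower bound.} This is the main obstacle. For $p=2$ one uses Lee's positive eigenfunction. Here we use, as in \cite[Proposition 2.5]{Jinp}, a positive function $\phi$ with $-\Delta_{p}\phi\ge(\frac np)^p\phi^{p-1}$. We build it from Lee's function $\varphi$, which satisfies $\Delta\varphi=(n+1)\varphi$ and $|d\varphi|^2\le\varphi^2$ (the latter by $Y\ge0$), via $\phi=\varphi^{-n/p}$. We then modulate $\phi$ by a Dirichlet profile $f(d(o,\cdot))$, namely $\Phi=\phi\, f$ with $f(t)=\sin(\pi t/(R+c))$ suitably shifted and made positive on $B(o,R)$. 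Picone's identity for the $p$-Laplacian gives $\lambda_{1,p}(B(o,R))\ge\inf_{B(o,R)}\frac{-\Delta_p\Phi}{\Phi^{p-1}}$.

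Computing $-\Delta_p\Phi/\Phi^{p-1}$ requires controlling $|d\varphi|/\varphi\to1$, the angle between $d\varphi$ and $dd(o,\cdot)$, and $\Delta d$. Lee's estimate gives the exact inequality $|d\log\varphi|\le1$, and we must show the cross terms have the correct sign up to $O(R^{-3})$. The delicate point is the degeneracy where $f'$ is small relative to $f$ (near the center) versus where $f$ is small (near $\partial B$). We would try the choice $f=\sin$ of $\log\varphi$ rather than of distance, so that the gradient directions coincide exactly. The required estimate then becomes the convexity inequality
\[
|a+b|^p\ge|a|^p+p|a|^{p-2}a\cdot b+(\text{second-order term})
\]
applied with $a=d\phi$ and $b=\phi\, d\log f$. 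Combined with $\Delta\log\varphi\ge n(1-|d\log\varphi|^2)+\ldots$, this yields the lower bound with the same coefficient. Finally, the sublevel sets of $\log\varphi$ are compared with $B(o,R)$ up to additive constants, and since $\frac{d}{dR}(R^{-2})=O(R^{-3})$, these constants affect only the $O(R^{-3})$ term.
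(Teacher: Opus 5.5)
\textbf{The upper bound is essentially sound (see the end), but the lower bound has a genuine gap.} You rightly call the lower bound the main obstacle, but it is not proved, and the supersolution you propose cannot give the sharp constant.

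Test it in the model $dt^2+e^{2t}g_N$. The collar is exponentially close to this model, and $\log\varphi$ is replaced by $t$, with $|dt|=1$ and $\Delta t=n$. Put $c=\frac np$, $\Psi=e^{-ct}\sin(k(L-t))$ and $\beta=k\cot(k(L-t))$. Then $z=(-\Psi'/\Psi)^{p-1}$ satisfies $-\Delta_p\Psi/\Psi^{p-1}=z'+nz-(p-1)z^{p/(p-1)}$, and where $c+\beta>0$ this works out to
\[
 \frac{-\Delta_p\Psi}{\Psi^{p-1}}=(c+\beta)^{p-2}\bigl[c^2+(p-1)k^2-(p-2)c\beta\bigr].
\]
\begin{itemize}
\item At the crest of the sine factor ($\beta=0$), which lies in the domain for your choice $k\approx\pi/R$, the value is $c^p+(p-1)c^{p-2}k^2$. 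Positivity forces $k\le\pi/(R+O(1))$.
\item Hence for $1<p<2$, Picone yields at most the coefficient $p-1<\frac{2(p-1)}p$. This is exactly the old lower coefficient $\min\{p-1,1\}$ of \cite{Jinp}.
\item For $p>2$ the bracket becomes negative as $\beta\to+\infty$, so $\Psi$ is not a supersolution near the zero of the sine. If you push that zero outside the ball, the term $-\frac{(p-1)(p-2)}2c^{p-2}\beta^2$ in the small-$\beta$ expansion lowers the $R^{-2}$ coefficient below the sharp value.
\item The convexity inequality does not close the gap. A lower bound for $|d\Phi|^p$ of your particular $\Phi$ bounds nothing from below. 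If it is meant as a ground-state substitution $u=\phi v$ for arbitrary $v$, the sharp quadratic remainder $\frac{p(p-1)}2|a|^{p-2}|b|^2$ is false globally: at $b=-a$ when $p>2$, and for $|b|\gg|a|$ when $p<2$.
\item For $p\ne2$, the $p$-Laplacian of any function of $w=\log\varphi$ contains $\nabla^2w(\nabla w,\nabla w)$. Lee's bound $|dw|\le1$ does not control this term, so it already undermines your claim that $\varphi^{-n/p}$ is a $p$-supersolution.
\end{itemize}

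\textbf{The missing idea is the exact nonlinear one-dimensional model.} Let $\omega$ be the first eigenfunction of $(e^{nt}|\omega'|^{p-2}\omega')'+\mu e^{nt}\omega^{p-1}=0$ with $\omega'(0)=0=\omega(L)$. Then $z=(-\omega'/\omega)^{p-1}$ solves the Riccati equation $z'+nz-(p-1)z^{p/(p-1)}=\mu$, with $\mu=(\frac np)^p+\delta$. The interval length is the travel time
\[
 L=\int_0^\infty\frac{dz}{\delta+(\frac np)^p-nz+(p-1)z^{\frac p{p-1}}}
 =\sqrt{\frac{2(p-1)}p}\,\Bigl(\frac np\Bigr)^{\frac{p-2}2}\frac{\pi}{\sqrt\delta}+O(1).
\]
The odd cubic term cancels in this expansion, and that is what produces $\frac{2(p-1)}p$ with an $O(R^{-3})$ remainder. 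Your linearization about $e^{-nt/p}$ is only an upper-bound heuristic.

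The paper transfers this model to balls through Sakurai's comparison theorem for domains with $\Ric\ge-ng$ and $H_{\partial\Omega}\ge n$. It applies it to truncations $E_R\supset B(o,R)$ with $H_{\partial E_R}\ge n$ and inradius $R+C_o$. This is where $Y\ge0$ enters, via the mean-curvature estimate of \cite[Lemma 4.1]{Jinp}.

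If you prefer to keep Lee's function, use the divergence form
\[
 \int|du|^p\ge\int\bigl(\operatorname{div}Z-(p-1)|Z|^{p/(p-1)}\bigr)|u|^p,
 \qquad Z=\zeta(w)\nabla w,
\]
with $\zeta$ a truncated solution of the same Riccati equation. When $|dw|=1$ the integrand is exactly $\zeta'+n\zeta-(p-1)\zeta^{p/(p-1)}$. Only $\Delta w=n+1-|dw|^2$ and $|dw|\le1$ enter, and no Hessian of $w$ appears. The regime $|dw|<1$ still has to be checked; for $p>2$ this also uses $|dw|\to1$ at infinity.

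\textbf{Upper bound.} The amplitude has to be $\sin^{2/p}$, so that $v^{p/2}$ is the Dirichlet sine. Taken literally, $e^{-nr/p}\sin$ gives the coefficient $\frac p2(\frac np)^{p-2}\pi^2$, which is the old upper bound. Since $\sin^{2/p}$ has infinite $p$-energy at its zeros when $p\ge3$, you also need an endpoint modification, such as the paper's linear pieces. With these changes, your direct shell computation in the collar is a valid substitute for the paper's Cheng-type comparison with the hyperbolic ball.
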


The coefficient in \eqref{eq:p-ball-expansion} lies between the bounds of \cite[Theorem 1.4]{Jinp}:
\[
 \min\{p-1,1\}\le\frac{2(p-1)}{p}\le\frac{p}{2},
 \qquad 1<p<\infty.
\]
Both inequalities become equalities precisely when $p=2$. The remainder here is $O(R^{-3})$ for every fixed $p>1$. For $1<p<2$, the upper-bound remainder in \cite[Theorem 1.4]{Jinp} was $O(R^{-(p+1)})$. The proof in Section~\ref{sec:p-balls} uses the one-dimensional comparison theorem of Sakurai~\cite[Theorem 1.6]{Sakurai}. We compute the precise large-interval asymptotics of its model eigenvalue and modify the hyperbolic test function in \cite{Jinp}. At $p=2$, \eqref{eq:p-ball-expansion} agrees with \eqref{eq:Jin}.
\\~\\
The third part of the paper gives an if-and-only-if characterization of nonnegative Yamabe infinity from an eigenvalue expansion. We use layers of fixed width near conformal infinity. Their exponentially small correction retains information that is absent from the first two terms of \eqref{eq:Jin}.

The layer theorem uses a geodesic defining function and the corresponding boundary representative.

\Needspace{16\baselineskip}
\begin{theorem}\label{thm:main}
Let $(X^{n+1},g^+)$ be a CCE manifold with $n\ge3$. Suppose that $x^2g^+$ is a $C^3$ geodesic conformal compactification with boundary metric $h=(x^2g^+)|_{TM}$. Set $r=-\ln x$ and $A_{R,\ell}=\{q\in X:R<r(q)<R+\ell\}$. Then for each fixed $\ell>0,$
\begin{equation}\label{eq:main}
 \lambda_1(A_{R,\ell})
 =\frac{n^2}{4}+\frac{\pi^2}{\ell^2}+C_\ell e^{-2R}+O(e^{-3R}),\ \ R\to\infty
\end{equation}
where
\begin{equation}\label{eq:main-coefficient}
 C_\ell=\frac{\pi^2(1-e^{-2\ell})}{2\ell(\pi^2+\ell^2)}\,\mu_1(L_h).
\end{equation}
and $L_h=-\Delta_h+\frac{n-2}{4(n-1)}R_h$ is the conformal Laplacian with first eigenvalue $\mu_1(L_h)$. Moreover,
\begin{equation}\label{eq:signintro}
 Y(M,[h])\ge0\quad\Longleftrightarrow\quad C_\ell\ge0.
\end{equation}
\end{theorem}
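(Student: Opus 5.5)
Near $M$ I will write the metric in geodesic normal form $g^+=x^{-2}(dx^2+h_x)$, where $h_x=h-x^2P_h+O(x^3)$ and $P_h=\frac{1}{n-2}\bigl(\Ric_h-\frac{R_h}{2(n-1)}h\bigr)$ is the Schouten tensor. This expansion comes from the Einstein equation and needs only the $C^3$ compactification. Then $\tr_hP_h=\frac{R_h}{2(n-1)}$. In the variable $r=-\ln x$,
\[
 g^+=dr^2+e^{2r}h_{e^{-r}},\qquad dv_{g^+}=e^{nr}\Bigl(1-\tfrac{R_h}{4(n-1)}e^{-2r}+O(e^{-3r})\Bigr)dr\,dv_h .
\]
Conjugating by $e^{-nr/2}$, i.e.\ writing $u=e^{-nr/2}w$, turns the Rayleigh quotient on $A_{R,\ell}$ into
\[
 \frac{\int\bigl(|\partial_rw|^2+e^{-2r}|dw|_h^2+e^{-2r}V w^2\bigr)dr\,dv_h}{\int w^2\,dr\,dv_h}+\frac{n^2}{4}+O(e^{-3R}).
\]
Here $V$ is the potential produced by the derivative of the density factor. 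The first thing I will check is that, after an integration by parts in $r$, $V$ equals $\frac{n-2}{4(n-1)}R_h$ modulo terms with an $r$-derivative. Since $\partial_r\bigl(-\tfrac{R_h}{4(n-1)}e^{-2r}\bigr)\cdot\tfrac n2$ produces $\tfrac{n}{4(n-1)}R_he^{-2r}$, part of this is compensated by the volume weight. So I will do the bookkeeping carefully to see the conformal Laplacian emerge. The coefficient of $R_h$ in $L_h$ is forced by conformal covariance, and I will use that as a check.

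Next I will treat the resulting operator $-\partial_r^2+e^{-2r}L_h$ on $(R,R+\ell)\times M$, with Dirichlet conditions at the ends, as a perturbation of $-\partial_r^2$. Set $t=r-R\in(0,\ell)$ and $\varepsilon=e^{-2R}$. The operator is then $-\partial_t^2+\varepsilon e^{-2t}L_h+O(\varepsilon^{3/2})$. For the upper bound I will use the test function $w=\sin(\pi t/\ell)\,\phi_1(y)$, where $\phi_1$ is the first eigenfunction of $L_h$. This gives
\[
 \frac{\pi^2}{\ell^2}+\varepsilon\,\mu_1\,\frac{\int_0^\ell e^{-2t}\sin^2(\pi t/\ell)\,dt}{\int_0^\ell\sin^2(\pi t/\ell)\,dt}.
\]
A direct computation yields $\frac{\pi^2(1-e^{-2\ell})}{2\ell(\pi^2+\ell^2)}$, matching \eqref{eq:main-coefficient}. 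I will also absorb the errors from the $O(x^3)$ terms in $h_x$ into $O(e^{-3R})$.

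For the lower bound I will use first-order perturbation theory in $\varepsilon$. The unperturbed operator $-\partial_t^2\otimes1$ has first eigenvalue $\pi^2/\ell^2$, but its eigenspace $\{\sin(\pi t/\ell)f(y)\}$ is infinite-dimensional. The plan is to project onto it, where the perturbation restricts to $\varepsilon\,c_\ell L_h$ with $c_\ell$ the radial average above. The orthogonal complement has a gap $3\pi^2/\ell^2$, but $L_h$ is unbounded, so standard perturbation theory does not apply directly.

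I expect this lower bound to be the main obstacle. I will handle it by splitting $w=\sin(\pi t/\ell)f+w^\perp$ and using the fact that $\varepsilon e^{-2t}L_h\ge\varepsilon e^{-2\ell}L_h-C\varepsilon$ is bounded below. The cross terms $\varepsilon\langle e^{-2t}L_h\sin f,w^\perp\rangle$ I will control with Cauchy--Schwarz in the $L_h$-form norm. These terms contribute $O(\varepsilon^2)$ once a minimizer is shown to have $\varepsilon\|w\|_{L_h}^2=O(\varepsilon)$, which follows from the upper bound. The low modes of $L_h$ then give $\pi^2/\ell^2+\varepsilon c_\ell\mu_1$, and the high modes only increase the quotient. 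This yields \eqref{eq:main}.

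Finally, for \eqref{eq:signintro}: $c_\ell>0$, so $C_\ell$ has the sign of $\mu_1(L_h)$. For $n\ge3$ the sign of $\mu_1(L_h)$ equals the sign of $Y(M,[h])$, since both are determined by the sign of the conformally invariant quadratic form. This gives $C_\ell\ge0\iff Y\ge0$.
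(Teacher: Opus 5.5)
Your reduction and upper bound are essentially sound. The lower bound, which you correctly flag as the crux, does not go through as proposed.

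\textbf{The gap.} After splitting $w=\sin(\pi t/\ell)f+w^\perp$, the upper bound gives you only $\varepsilon\langle e^{-2t}(L_h+K)w,w\rangle=O(\varepsilon)$. Here $\varepsilon=e^{-2R}$ is your parameter and $K$ is chosen so that $L_h+K>0$. In other words, the shifted $L_h$-form norms of $f$ and $w^\perp$ are only $O(1)$. Cauchy--Schwarz in that norm then bounds the cross term $\varepsilon\langle e^{-2t}L_h\sin(\pi t/\ell)f,w^\perp\rangle$ by $O(\varepsilon)$, not $O(\varepsilon^2)$. That is the same order as the term $C_\ell e^{-2R}$ you are trying to identify. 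The $t$-gap $3\pi^2/\ell^2$ does give $\|w^\perp\|_{L^2}^2=O(\varepsilon)$, but it gives no smallness of $w^\perp$ in the form norm. The alternative absorption $\varepsilon^2\|L_hf\|^2/\eta+\eta\|w^\perp\|^2$ would need control of $\|L_hf\|_{L^2}$, which fails on the high modes of $f$. So the argument as written determines neither $C_\ell$ nor the $O(e^{-3R})$ remainder.

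\textbf{A repair.} Use separability instead of a split in $t$. The principal form $\int|w_t|^2+\varepsilon\int e^{-2t}\bigl(|d_yw|_h^2+\frac{n-2}{4(n-1)}R_hw^2\bigr)$ is diagonalized by the eigenbasis $\phi_k$ of $L_h$. On the $k$th mode it is the form of $-\partial_t^2+\varepsilon\mu_ke^{-2t}$ on $(0,\ell)$, whose first eigenvalue $\nu(\varepsilon\mu_k)$ is nondecreasing in $\mu_k$. Hence the model infimum is exactly $\nu(\varepsilon\mu_1)$, and only simple-eigenvalue perturbation theory in one variable is needed.

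The non-separable remainders are of two kinds. One is a bounded potential of size $O(e^{-3R})$. The others are tangential terms coming from $h_x^{-1}-h^{-1}$ and $d_y\ln v$. These are bounded by $Ce^{-2R}$ times the nonnegative tangential energy, plus $O(e^{-4R})\|w\|^2$. Together they move the eigenvalue by only $O(e^{-3R})$.

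\textbf{The conjugation.} For the reduction you must conjugate by $J^{-1/2}=e^{-nr/2}v^{-1/2}$, with $v=(\det h_x/\det h)^{1/2}$, not by $e^{-nr/2}$ alone. The weight $v=1-\frac{R_h}{4(n-1)}x^2+O(x^3)$ is an $O(e^{-2R})$ effect, so it cannot be dropped from the denominator as in your display. The extra factor $v^{-1/2}$ contributes $\frac12\partial_r^2\ln v=-\frac{R_h}{2(n-1)}x^2$. This is exactly what turns your $\frac{n}{4(n-1)}$ into $\frac{n-2}{4(n-1)}$.

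\textbf{Comparison with the paper.} The paper avoids the lower-bound difficulty by using Barta's inequality. It takes $\varphi$ to be the exact first eigenfunction of the perturbed radial problem $-\partial_s^2+\frac{n^2}{4}+e^{-2R}\mu_1e^{-2s}$ on $(0,\ell)$, with eigenvalue $\nu$. It then shows pointwise that $u=J^{-1/2}\varphi(r-R)\psi_1(y)$ satisfies $-\Delta_{g^+}u/u=\nu+O(e^{-3R})$. This gives the upper bound (via the Rayleigh quotient) and the lower bound (via Barta) at once. That route needs $\min\psi_1>0$ and second tangential derivatives of $\ln v$. The quadratic-form route needs only $d_y\ln v=O(x^2)$ and no pointwise information on $\psi_1$.
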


Although $C_\ell$ depends on the representative $h$, its sign is conformally invariant. Theorem~\ref{thm:main} requires neither $\lambda_1(X)=\frac{n^2}{4}$ nor a nonresonance assumption. The remainder is uniform when $\ell$ ranges over a compact subset of $(0,\infty)$. Section~\ref{sec:proof} proves the expansion by constructing a positive test function from the first boundary eigenfunction and the perturbed first radial mode. Its Rayleigh quotient gives the upper bound, and Barta's inequality gives the lower bound. The relation with ordinary distance layers is discussed briefly at the end of that section.
\section{Large-ball eigenvalues and examples}\label{sec:counterexamples}
A defining function $x$ is \emph{geodesic} if $|dx|_{x^2g^+}=1$ throughout a boundary collar. If $x^2g^+$ is a $C^{m,\alpha}$ conformal compactification, we call it a $C^{m,\alpha}$ \emph{geodesic conformal compactification}. It is well known that a $C^{m,\alpha}$ conformally compact Einstein metric, with $m\ge2$ and $0\le\alpha<1$, admits a $C^{m-1,\alpha}$ geodesic conformal compactification near the boundary inducing the same boundary metric; see Graham--Lee~\cite[Lemma 5.2]{GL} and Anderson~\cite[Section 1, (1.9)--(1.11)]{Anderson}. In geodesic collar coordinates, $g^+=x^{-2}(dx^2+h_x)$. In this section the compactification is smooth, and we choose a smooth geodesic defining function $x$.
\subsection{An isolated spectral bottom}
\begin{proof}[Proof of Theorem~\ref{thm:balls}\textup{(i)}]
Write $\kappa=\sqrt{\frac{n^2}{4}-\lambda_1(X)}>0$. The isolated bottom eigenvalue is simple, and its eigenfunction $\phi$ can be chosen positive with $\|\phi\|_{L^2(X)}=1$. The boundary expansion for an $L^2$ eigenfunction~\cite[Section 3, (3.14)]{GZ} gives
\[
 \phi=x^{\frac{n}{2}+\kappa}F,\qquad
 F=b(y)+O(x)\in C^\infty(\overline X),\qquad b\not\equiv0,
\]
where $y$ denotes local coordinates on $M$ and $b=F|_M$. To justify the last assertion, if $b$ vanished identically, the indicial equation would force every Taylor coefficient of $F$ to vanish: at the $j$th step the nonzero indicial factor is $-j(2\kappa+j)$ for $j\ge1$. Unique continuation at infinity~\cite{Mazzeo91} would then give $\phi=0$, contradicting its normalization.

In the geodesic coordinate $t=-\ln x$, the metric is $dt^2+e^{2t}h_{e^{-t}}$. Since $h_x=h+O(x^2)$,
\[
 dv_{g^+}=e^{nt}(1+O(e^{-2t}))\,dt\,dv_h,
 \qquad \phi=e^{-(\frac{n}{2}+\kappa)t}(b(y)+O(e^{-t})).
\]
The errors are uniform on the compact boundary. Hence
\begin{align}\label{eq:ground-tail-coordinate}
 \int_{\{t>R\}}\phi^2\,dv_{g^+}
 &=\int_R^\infty\!\int_M e^{-2\kappa t}
 (b(y)+O(e^{-t}))^2(1+O(e^{-2t}))\,dv_h\,dt\notag\\
 &=\left(\int_Mb^2\,dv_h\right)\int_R^\infty e^{-2\kappa t}\,dt
 +O\!\left(\int_R^\infty e^{-(2\kappa+1)t}\,dt\right)\notag\\
 &=\frac{e^{-2\kappa R}}{2\kappa}\int_Mb^2\,dv_h
 +O(e^{-(2\kappa+1)R})\notag\\
 &=\frac{e^{-2\kappa R}}{2\kappa}
 \left(\int_M b^2\,dv_h+O(e^{-R})\right).
\end{align}
Extend $t$ smoothly over the compact interior. For a fixed $o\in X$, one has $|d_{g^+}(o,\cdot)-t(\cdot)|\le C$ on $X$ for some constant $C>0$. Thus, for large $R$,
\[
 \{t>R+C\}\subset X\setminus B(o,R)\subset\{t>R-C-1\}.
\]
Since $\int_Mb^2\,dv_h>0$, \eqref{eq:ground-tail-coordinate} gives positive constants, depending on $o$, such that
\begin{equation}\label{eq:ground-tail-ball}
 c_o e^{-2\kappa R}\le
 \int_{X\setminus B(o,R)}\phi^2\,dv_{g^+}
 \le C_o e^{-2\kappa R}.
\end{equation}

Choose a Lipschitz cutoff $\chi$, equal to one on $B(o,R-2)$, supported in $B(o,R-1)$, and satisfying $0\le\chi\le1$ and $|d\chi|\le2$ almost everywhere. Integration by parts gives
\[
 \int_X|d(\chi\phi)|^2\,dv_{g^+}
 -\lambda_1(X)\int_X\chi^2\phi^2\,dv_{g^+}
 =\int_X\phi^2|d\chi|^2\,dv_{g^+}.
\]
Since $\chi\phi\in H_0^1(B(o,R))$, its Rayleigh quotient yields
\begin{align*}
 \lambda_1(B(o,R))-\lambda_1(X)
 &\le\frac{\int_X|d(\chi\phi)|^2\,dv_{g^+}}
 {\int_X\chi^2\phi^2\,dv_{g^+}}-\lambda_1(X)\\
 &=\frac{\int_X\phi^2|d\chi|^2\,dv_{g^+}}
 {\int_X\chi^2\phi^2\,dv_{g^+}}\\
 &\le\frac{4\int_{X\setminus B(o,R-2)}\phi^2\,dv_{g^+}}
 {1-\int_{X\setminus B(o,R-2)}\phi^2\,dv_{g^+}}\\
 &\le C_o e^{-2\kappa R}.
\end{align*}
In the last step, \eqref{eq:ground-tail-ball} bounds the numerator by $4C_o e^{4\kappa}e^{-2\kappa R}$ and the denominator from below by $\frac{1}{2}$ for all sufficiently large $R$; fixed factors are absorbed into $C_o$.

For the reverse inequality, isolation and simplicity imply a positive spectral gap
\[
 \eta:=\inf\bigl(\sigma(-\Delta_{g^+})\setminus\{\lambda_1(X)\}\bigr)
 -\lambda_1(X)>0.
\]
By the spectral theorem, the quadratic form on the orthogonal complement of $\phi$ satisfies
\[
 \int_X|dw|^2\,dv_{g^+}\ge(\lambda_1(X)+\eta)\|w\|_{L^2}^2,
 \qquad \forall w\in H^1(X),\quad \langle w,\phi\rangle_{L^2}=0.
\]
Take any real $v\in H_0^1(B(o,R))$ with $\|v\|_{L^2}=1$ and extend it by zero to $X$. The definition of $H_0^1$ by closure ensures that this extension belongs to $H^1(X)$. Decompose
\[
 v=a\phi+w,\qquad a=\langle v,\phi\rangle_{L^2},\qquad
 w\perp\phi,\qquad \|w\|_{L^2}^2=1-|a|^2.
\]
The divergence formula, first for compactly supported smooth functions and then by density in $H^1(X)$, gives
\[
 \int_X\langle d\phi,dw\rangle\,dv_{g^+}
 =\lambda_1(X)\langle\phi,w\rangle_{L^2}=0.
\]
Consequently,
\begin{align*}
 \int_X|dv|^2\,dv_{g^+}-\lambda_1(X)
 &=\lambda_1(X)|a|^2+\int_X|dw|^2\,dv_{g^+}-\lambda_1(X)\\
 &=\int_X|dw|^2\,dv_{g^+}-\lambda_1(X)\|w\|_{L^2}^2\\
 &\ge\eta\|w\|_{L^2}^2
 =\eta(1-|a|^2).
\end{align*}
Since $v=0$ almost everywhere outside $B(o,R)$, Cauchy--Schwarz gives
\[
 |a|^2=\left|\int_{B(o,R)}v\phi\,dv_{g^+}\right|^2
 \le\left(\int_{B(o,R)}v^2\,dv_{g^+}\right)
 \left(\int_{B(o,R)}\phi^2\,dv_{g^+}\right)
 =\int_{B(o,R)}\phi^2\,dv_{g^+}.
\]
Using $\|\phi\|_{L^2}=1$, we obtain
\begin{align*}
 \int_X|dv|^2\,dv_{g^+}-\lambda_1(X)
 &\ge\eta(1-|a|^2)\\
 &\ge\eta\left(1-\int_{B(o,R)}\phi^2\,dv_{g^+}\right)\\
 &=\eta\int_{X\setminus B(o,R)}\phi^2\,dv_{g^+}\\
 &\ge\eta c_o e^{-2\kappa R}.
\end{align*}
Taking the infimum over all such $v$ yields
\[
 \lambda_1(B(o,R))-\lambda_1(X)
 =\inf_{\substack{v\in H_0^1(B(o,R))\\\|v\|_{L^2}=1}}
 \left(\int_X|dv|^2\,dv_{g^+}-\lambda_1(X)\right)
 \ge\eta c_o e^{-2\kappa R}.
\]
Absorbing $\eta$ into $c_o$ and combining this with the upper bound proves
\begin{equation}\label{eq:ball-exponential-bounds}
 c_o e^{-2\kappa R}\le
 \lambda_1(B(o,R))-\lambda_1(X)
 \le C_o e^{-2\kappa R}.
\end{equation}
This proves \eqref{eq:ball-exponential-estimate}. No smoothness of the distance function or of the ball boundary is needed: the upper bound uses a Lipschitz cutoff and the lower bound uses only extension by zero from $H_0^1(B(o,R))$.
\end{proof}

\subsection{The threshold resolvent and a positive function}
We now assume $\lambda_1(X)=\frac{n^2}{4}$ and prove part~\textup{(ii)}. For real $r>0$, the resolvent in \eqref{eq:resolvent-definition} satisfies
\[
 \mathbf{R}\left(\frac{n}{2}+r\right)=(P+r^2)^{-1},
 \qquad P=-\Delta_{g^+}-\frac{n^2}{4}\ge0.
\]
By \cite[Lemma 3.2 and its proof]{GQ},
\[
 \mathbf{R}(s)\text{ analytic at }\frac{n}{2}
 \quad\Longleftrightarrow\quad N_0=\{0\}.
\]
If a pole occurs at $s=\frac{n}{2}$, it is simple. Its residue is a nonzero finite-rank operator with range in $N_0$.

For real $r>0$ and $f\ge0$, let $u=(P+r^2)^{-1}f$ and
$u_-=\max\{-u,0\}\in H^1(X)$. Testing $(P+r^2)u=f$ with $u_-$
and integrating by parts gives
\[
 0\le\int_Xfu_-\,dv_{g^+}
 =-\left[\int_X\left(|du_-|_{g^+}^2
          -\frac{n^2}{4}u_-^2\right)dv_{g^+}
       +r^2\int_Xu_-^2\,dv_{g^+}\right]\le0.
\]
The second inequality uses $P\ge0$. Hence $u_-=0$ and
$(P+r^2)^{-1}f=u\ge0$.

In the regular case, choose a fixed smooth compactly supported function $0\ne f\in C_c^\infty(X)$ with $f\ge0$, independent of $r$, and define
\[
 \phi=\mathbf{R}\left(\frac{n}{2}\right)f
 =\lim_{r\downarrow0}(P+r^2)^{-1}f\ge0.
\]
The continuation and elliptic regularity give convergence in $C^\infty$ on compact subsets. Passing to the limit in $(P+r^2)(P+r^2)^{-1}f=f$ gives $P\phi=f$, so $\phi\not\equiv0$.

In the singular case, let $A=\operatorname*{Res}_{s=\frac{n}{2}}\mathbf{R}(s)\ne0$. The Laurent expansion is
\[
 \mathbf{R}\left(\frac{n}{2}+r\right)=\frac{A}{r}+\text{an analytic family}.
\]
Choose a fixed smooth compactly supported function $0\ne f\in C_c^\infty(X)$ with $f\ge0$ and $Af\ne0$, independent of $r$. Such a choice exists: every real test function is the difference of two nonnegative smooth compactly supported functions, so vanishing on all of them would imply $A=0$. Define
\[
 \phi=Af=\lim_{r\downarrow0}r(P+r^2)^{-1}f\ge0.
\]
Multiplying the resolvent equation by $r$ gives
\[
 P\bigl(r(P+r^2)^{-1}f\bigr)
 =rf-r^2\bigl(r(P+r^2)^{-1}f\bigr)\longrightarrow0.
\]
Hence $P\phi=0$ and $\phi\not\equiv0$.
\[
 \text{In either construction, $f$ is fixed once and for all.}
\]
Neither $f$ nor its support depends on the spectral parameter $r$ or on the truncation radius $R$ used below. In both cases,
\[
 -\Delta_{g^+}\phi=\frac{n^2}{4}\phi+P\phi\ge0.
\]
The strong minimum principle on the connected manifold $X$ therefore gives $\phi>0$. The boundary mapping properties of the continued resolvent, or of its residue, give the following expansion; see \cite[Lemma 3.2 and its proof]{GQ}:
\begin{equation}\label{eq:positive-threshold}
 \phi=x^{\frac{n}{2}}F,\quad F\in C^\infty(\overline X),\qquad
 P\phi=\begin{cases}f,&N_0=\{0\},\\0,&N_0\ne\{0\}.
 \end{cases}
\end{equation}
Since $f$ is compactly supported, $P\phi=0$ in a collar of infinity. The following lemma records the boundary information needed in the variational argument.

\begin{lemma}\label{lem:positive-boundary-coefficient}
Let $(X^{n+1},g^+)$ be a connected smoothly conformally compact Einstein manifold, and let $x$ be a smooth geodesic defining function, so that
\[
 g^+=x^{-2}(dx^2+h_x)
\]
near $M=\partial\overline X$. Put $P=-\Delta_{g^+}-\frac{n^2}{4}$. Suppose that $\phi>0$ in $X$,
\[
 \phi=x^{\frac{n}{2}}F,\qquad F\in C^\infty(\overline X),
\]
and $P\phi=0$ in a collar of $M$. Then, with $b=F|_M$,
\begin{equation}\label{eq:positive-b}
 \phi=x^{\frac{n}{2}}\bigl(b(y)+O(x^2)\bigr),
 \qquad b>0\quad\text{on }M.
\end{equation}
\end{lemma}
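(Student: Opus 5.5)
The plan has two parts: first read off the structure of $F$ near $M$ from the indicial equation of $P$, then rule out zeros of $b$ with a Hopf-type boundary argument for a conjugated operator that is uniformly degenerate at $M$.

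\textbf{Step 1: the Taylor expansion of $F$.} In the collar write $g^+=x^{-2}(dx^2+h_x)$. The Laplacian is
\[
\Delta_{g^+}=x^2\partial_x^2-(n-1)x\partial_x+\tfrac12 x^2(\tr_{h_x}\partial_x h_x)\partial_x+x^2\Delta_{h_x}.
\]
Since $g^+$ is Einstein and $x$ is geodesic, $h_x=h+x^2h_2+O(x^3)$, so $\tr_{h_x}\partial_xh_x=O(x)$. Conjugating by $x^{n/2}$ gives
\[
x^{-\frac n2}Px^{\frac n2}=-x^2\partial_x^2-x\partial_x+x^2Q,
\]
where $Q$ is a smooth second-order operator built from $\partial_x$ and $\Delta_{h_x}$. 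Apply this to $F=\sum_j F_j(y)x^j$. The coefficient of $x^j$ contains the indicial term $-j^2F_j$ plus contributions from $F_0,\dots,F_{j-2}$ only. Taking $j=1$ forces $F_1=0$, which gives $\phi=x^{n/2}(b+O(x^2))$.

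\textbf{Step 2: $b\ge0$.} This is immediate from $\phi>0$, since $b=\lim_{x\to0}x^{-n/2}\phi\ge0$.

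\textbf{Step 3: $b>0$ everywhere.} This is the main obstacle. Suppose $b(y_0)=0$. The operator $L=x^{-n/2}Px^{n/2}$ has smooth coefficients on $\overline X$ but degenerates at $x=0$, so Hopf's lemma does not apply directly. I would pass to the variable $t=-\ln x$, in which
\[
L=-\partial_t^2+e^{-2t}(\dots),
\]
and $F$ is a positive solution of $LF=0$ on a half-cylinder $(T,\infty)\times M$.

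Harnack's inequality is uniform for $g^+$ on balls of fixed radius, by bounded geometry. Equivalently, it holds for $F$ on unit $t$-boxes of the form $[t,t+1]\times\{y:d_h(y,y_0)<e^{-t}\}$. However, this controls $F$ only on shrinking $y$-neighborhoods, so it does not directly compare $F$ at different boundary points.

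Instead I would use a barrier together with a symmetrization in $y$. Average $F$ over small $h$-balls: this yields $\int_M F\,\psi\,dv_h=:m(t)$ for a cutoff $\psi\ge0$. Integrating the equation against $\psi$, $m$ satisfies $m''=O(e^{-2t})\,\|F(t)\|_{C^2}$. Smoothness of $F$ up to $M$ makes the right side $O(e^{-2t})$. Hence $m(t)\to\int b\psi$ with an $O(e^{-2t})$ error, and, combined with the absence of a $t$-linear mode, positivity of $m$ is preserved.

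\textbf{Step 4: from averages to points (the crux).} To upgrade from averages to pointwise positivity of $b$, I would apply the boundary Harnack principle, or Bony's strong maximum principle, to the degenerate operator $L$. Near $x=0$, $L$ behaves like $-(x\partial_x)^2+x^2(\Delta_h+\partial_x^2)$. On a small half-ball around $y_0$, consider the comparison function
\[
w=\epsilon\,x^{\delta}\bigl(c-|y-y_0|^2-x\bigr)^2.
\]
I would try to check $Lw\le0$ for small $\delta$; if it holds, comparison with $F$ on a region where $F$ is bounded below forces $F(0,y_0)>0$. The worry is that $x^\delta$ is not a true lower bound at $x=0$.

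If that barrier fails, my fallback is to restrict $Pu=0$ to $M$ at second order. The $x^2$ coefficient gives
\[
4F_2=\Bigl(-\Delta_h+\tfrac{n-2}{4(n-1)}R_h\Bigr)b,
\]
a Schr\"odinger-type relation with $b\ge0$. Then I would show that $b$ is a positive multiple of the first eigenfunction of $L_h$. This would follow by comparing with Lee's or the Guillarmou--Qing boundary identity $\int_M b\,L_h b=\lim(\dots)$, and it would let me invoke the strong maximum principle on $M$ for the equation satisfied by $b$. Since $b\not\equiv0$ (otherwise all $F_j$ vanish and unique continuation gives $\phi=0$, as in part (i)), this yields $b>0$.
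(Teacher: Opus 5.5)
\textbf{Verdict: genuine gap.} Your Steps 1 and 2 are correct and agree with the paper. The paper gets $F_x(0,y)=0$ by multiplying the equation for $F$ by $x$ and letting $x\downarrow0$, which is your indicial computation at $j=1$. The gap is that the real content of the lemma, $b(y)>0$ at \emph{every} point of $M$, is never proved: Steps 3 and 4 list strategies, and none of them closes.

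\textbf{Why each route in Steps 3 and 4 fails.}
\begin{itemize}
\item \emph{Averaging.} The averages $m(t)$ carry no pointwise information. Even $m(t)>0$ with $m(t)-\int_M b\psi\,dv_h=O(e^{-2t})$ does not force $\int_M b\psi\,dv_h>0$.
\item \emph{Bony.} Bony's principle propagates zeros only along the vector fields $x\partial_x$, $x\partial_{y_i}$ and the drift. All of these vanish on $M$, so it says nothing at a boundary point.
\item \emph{Your explicit barrier.} With $s=c-|y-y_0|^2-x$ and $L=-(x\partial_x)^2-x^2H\partial_x-x^2\Delta_{h_x}-\frac n2xH$, one finds
\[
 \epsilon^{-1}x^{-\delta}Lw=-\delta^2s^2+2(2\delta+1)xs-2x^2-x^2\Delta_{h_x}(s^2)+O(x^2)s^2+O(x^3)s .
\]
Near the cap $\{s=0\}$, with $|y-y_0|$ small and $s$ comparable to $x$, the leading form $-\delta^2s^2+2(2\delta+1)xs-(2+8|y-y_0|^2)x^2$ is positive for every $\delta$. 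So $Lw\le0$ fails there.
\item \emph{The fallback.} This is wrong in principle. Because the indicial root $\frac n2$ is double, $b$ is free local data. The $x^2$-coefficient of the equation only determines $F_2$ from $b$; with your normalization it reads $4F_2=-\Delta_hb+\frac{n}{4(n-1)}R_hb$, and $L_h$ appears only for $(\det h_x/\det h)^{1/4}F$. No equation on $M$ is imposed on $b$, and $b$ is not an eigenfunction of $L_h$ in general. For example, on $\mathbb H^{n+1}$ with the round representative, $\phi=\mathbf R(\frac n2)f$ for $f\ge0$ concentrated away from the centre has nonconstant trace, whereas the first eigenfunction of $L_h$ is constant.
\end{itemize}

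\textbf{The missing idea.} As in the paper, remove the degeneracy rather than fight it: $\partial_x^2+x^{-1}\partial_x$ is the radial part of the Laplacian on $\mathbb R^2$. Setting $\widetilde F(z,y)=F(|z|,y)$ for $z\in\mathbb R^2$, the equation becomes
\[
 \Delta_z\widetilde F+\Delta_{h_{|z|}}\widetilde F+\frac{H}{|z|}\,z\cdot\nabla_z\widetilde F+\frac{nH}{2|z|}\widetilde F=0,
 \qquad H=\partial_x\ln\sqrt{\det h_x/\det h}=O(x).
\]
This operator is uniformly elliptic with bounded continuous coefficients across $z=0$. Your Step 1 ($F_x(0,y)=0$) is exactly what makes $\widetilde F$ of class $C^2$ there. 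Points of $M$ are now interior points. The classical strong minimum principle, applied to the nonnegative supersolution $\widetilde F$ (after lowering the bounded zeroth-order coefficient), which is positive for $z\neq0$, gives $b(y)=\widetilde F(0,y)>0$.

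\textbf{How your barrier idea could be repaired.} Take $w=\epsilon x^\delta\eta(y)$ with $0<\delta<2$ and a Hopf-type cutoff $\eta$. Then
\[
 Lw=\epsilon x^\delta\bigl[-(\delta^2+O(x^2))\eta-x^2\Delta_{h_x}\eta\bigr]\le0
\]
near $M$. Comparison on $\{x>\tau\}$, letting $\tau\to0$, gives $F(x,y_0)\ge\epsilon\eta(y_0)x^\delta$. This contradicts $F(x,y_0)=O(x^2)$, which is what $b(y_0)=0$ and Step 1 imply. So the vanishing of $x^\delta$ at $x=0$ that worried you is harmless once you compare orders of vanishing.
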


\begin{proof}
Set $H=\partial_x\ln\sqrt{\frac{\det h_x}{\det h}}$. Since $h_x=h+O(x^2)$ and $H=O(x)$, substitution of $\phi=x^{\frac{n}{2}}F$ into $P\phi=0$ gives
\begin{equation}\label{eq:lift-F}
 F_{xx}+\left(\frac{1}{x}+H\right)F_x
 +\Delta_{h_x}F+\frac{nH}{2x}F=0.
\end{equation}
We multiply \eqref{eq:lift-F} by $x$ and let $x\downarrow0$. This gives $F_x(0,y)=0$. Taylor's formula then gives
\[
 F(x,y)=b(y)+O(x^2).
\]
It remains to prove that $b>0$ on $M$.
The equation \eqref{eq:lift-F} for $F$ contains the singular term $\frac{1}{x}F_x$. To remove this singularity, we introduce $z\in\mathbb R^2$ and define
\[
 \widetilde F(z,y)=F(|z|,y).
\]
For $z\ne0$, the radial Laplacian in two dimensions satisfies
\[
 \Delta_z\widetilde F
 =F_{xx}(|z|,y)+\frac{1}{|z|}F_x(|z|,y).
\]
Thus \eqref{eq:lift-F} becomes
\[
 \Delta_z\widetilde F+\Delta_{h_{|z|}}\widetilde F
 +\frac{H(|z|,y)}{|z|}z\cdot\nabla_z\widetilde F
 +\frac{nH(|z|,y)}{2|z|}\widetilde F=0.
\]
The identity $F_x(0,y)=0$ ensures that $\widetilde F$ is $C^2$ across $z=0$. Since $H=O(x)$ and $H$ is smooth, the coefficients also extend continuously across $z=0$. The equation therefore holds there by continuity. Its operator is locally uniformly elliptic and has bounded lower-order coefficients.

Since $\widetilde F\ge0$ and its equation has a bounded zeroth-order coefficient, we may lower that coefficient to a negative constant so that $\widetilde F$ is a supersolution. The strong minimum principle~\cite[Theorem 3.5]{GT}, together with $\widetilde F(z,y)>0$ for $z\ne0$, gives $b(y)=\widetilde F(0,y)>0$ for every $y\in M$. This proves \eqref{eq:positive-b}.
\end{proof}

\subsection{Uniform variational comparison}
Let $x$ be a geodesic defining function and set $t=-\ln x$. Choose $t_0>0$ such that $X\setminus K$ is identified with $(t_0,\infty)\times M$, where
\[
 K=X\setminus\bigl((t_0,\infty)\times M\bigr)
\]
is compact. We increase $t_0$ so that $K$ is a connected smooth compact region and $\operatorname{supp}f\subset\operatorname{int}K$. For $R>t_0$, define
\[
 X_R:=X\setminus\bigl([R,\infty)\times M\bigr).
 \qquad\text{Then}\qquad \partial X_R=\{R\}\times M.
\]

All product sets here are understood through the collar identification. In these coordinates,
\[
 g^+=dt^2+e^{2t}h_{e^{-t}},\qquad
 d\mu=\phi^2\,dv_{g^+},\qquad W=\frac{P\phi}{\phi}.
\]
Thus $W=0$ in the pole case; in the analytic case $W=\frac{f}{\phi}\ge0$ is supported in the interior of $K$ and is not identically zero. The function $\phi$, the region $K$, and all constants in the compact-region estimates below are independent of $R$. On the collar, \eqref{eq:positive-b} gives
\begin{equation}\label{eq:weighted-density}
 d\mu=\sigma(t,y)\,dt\,dv_h,\qquad
 \sigma(t,y)=b(y)^2(1+O(e^{-2t})),\quad \min_M b>0.
\end{equation}
For fixed $R$, $\phi$ is smooth and strictly positive on $\overline{X_R}$. Multiplication by $\phi$ is therefore an isomorphism of $H_0^1(X_R)$ onto itself. For real $v\in C_c^\infty(X_R)$, set $u=\phi v$. Expanding the square and using
$d(\phi v^2)=v^2d\phi+2\phi v\,dv$, we obtain
\begin{align}\label{eq:ground-balls}
 &\int_{X_R}\left(|du|^2-\frac{n^2}{4}u^2\right)dv_{g^+}\notag\\
 &=\int_{X_R}\left(\phi^2|dv|^2+2\phi v\langle d\phi,dv\rangle
       +v^2|d\phi|^2-\frac{n^2}{4}\phi^2v^2\right)dv_{g^+}\notag\\
 &=\int_{X_R}\phi^2|dv|^2\,dv_{g^+}
   +\int_{X_R}\left(\langle d\phi,d(\phi v^2)\rangle
       -\frac{n^2}{4}\phi^2v^2\right)dv_{g^+}\notag\\
 &=\int_{X_R}\phi^2|dv|^2\,dv_{g^+}
   +\int_{X_R}\phi v^2\left(-\Delta_{g^+}\phi-\frac{n^2}{4}\phi\right)dv_{g^+}\notag\\
 &=\int_{X_R}|dv|^2\,d\mu+\int_KWv^2\,d\mu \notag\\
 :&=q(v).
\end{align}
There is no boundary term because $v$ has compact support. Density of $C_c^\infty(X_R)$ in $H_0^1(X_R)$ extends the identity to all real $v\in H_0^1(X_R)$: on this fixed compact region the coefficients $\phi$, $d\phi$, and $W$ are bounded, and $\phi$ is bounded away from zero. Since $\int_{X_R}u^2\,dv_{g^+}=\int_{X_R}v^2\,d\mu$, the Dirichlet variational principle gives
\[
 \lambda_1(X_R)-\frac{n^2}{4}
 =\inf_{\substack{v\in H_0^1(X_R)\\v\ne0}}
 \frac{q(v)}{\displaystyle\int_{X_R}v^2\,d\mu}.
\]
In particular, $q(v)\ge0$. We now estimate this quotient with constants uniform as $R\to\infty$.

On the fixed compact region, the trace and Poincar\'e inequalities yield
\begin{equation}\label{eq:compact-res}
 \int_Kv^2\,d\mu\le C\left(\int_K|dv|^2\,d\mu+
 \int_M b^2|v(t_0,y)|^2\,dv_h\right).
\end{equation}
To see this explicitly, let $\overline v=\frac{1}{\mu(K)}\int_Kv\,d\mu$. Since $\phi$ is smooth and strictly positive on the fixed compact region $K$, the measures $d\mu=\phi^2dv_{g^+}$ and $dv_{g^+}$ are uniformly equivalent on $K$. Likewise, the positive measure $b^2dv_h$ is uniformly equivalent to the induced measure on $\partial K=\{t_0\}\times M$. The Poincar\'e inequality on the connected region $K$, followed by the trace theorem, gives
\[
 \int_K|v-\overline v|^2\,d\mu
 +\int_Mb^2|v(t_0,y)-\overline v|^2\,dv_h
 \le C\int_K|dv|^2\,d\mu.
\]
Moreover,
\[
 |\overline v|^2\int_Mb^2\,dv_h
 \le2\int_Mb^2|v(t_0,y)|^2\,dv_h
    +2\int_Mb^2|v(t_0,y)-\overline v|^2\,dv_h.
\]
Combining these inequalities with
$\int_Kv^2\,d\mu=\int_K|v-\overline v|^2\,d\mu+\mu(K)|\overline v|^2$
proves \eqref{eq:compact-res}.

In the analytic case there is the stronger estimate
\begin{equation}\label{eq:compact-reg}
 \int_Kv^2\,d\mu+\int_Mb^2|v(t_0,y)|^2\,dv_h
 \le C\int_K(|dv|^2+Wv^2)\,d\mu.
\end{equation}
Here $W\in L^\infty(K)$, $W\ge0$, and $\int_KW\,d\mu>0$. The classical Poincar\'e inequality with a nonnegative potential follows from
\begin{align*}
 |\overline v|^2\int_KW\,d\mu
 &\le2\int_KWv^2\,d\mu
     +2\int_KW|v-\overline v|^2\,d\mu\\
 &\le2\int_KWv^2\,d\mu+C\int_K|dv|^2\,d\mu
\end{align*}
and the ordinary Poincar\'e inequality. Indeed,
\begin{align*}
 \int_Kv^2\,d\mu
 &=\int_K|v-\overline v|^2\,d\mu+\mu(K)|\overline v|^2\\
 &\le C\int_K\bigl(|dv|^2+Wv^2\bigr)\,d\mu.
\end{align*}
The trace theorem and the equivalence of the weighted and unweighted measures then give
\begin{align*}
 \int_Mb^2|v(t_0,y)|^2\,dv_h
 &\le C\left(\int_Kv^2\,d\mu+\int_K|dv|^2\,d\mu\right)\\
 &\le C\int_K\bigl(|dv|^2+Wv^2\bigr)\,d\mu,
\end{align*}
which proves \eqref{eq:compact-reg}, including its boundary term.

For each $y$, change the radial coordinate by
\begin{equation}\label{eq:optical-coordinate}
 s(t,y)=\int_{t_0}^t\frac{b(y)^2}{\sigma(\tau,y)}\,d\tau,
 \qquad s(R,y)=R+O(1).
\end{equation}
Indeed, \eqref{eq:weighted-density} implies
\[
 \frac{ds}{dt}=\frac{b^2}{\sigma}=1+O(e^{-2t}),\qquad
 |s(t,y)-(t-t_0)|
 \le C\int_{t_0}^\infty e^{-2\tau}\,d\tau\le C.
\]
The change of variable is strictly increasing, its derivative and inverse derivative are uniformly bounded, and all estimates are uniform in $y$ and $R$. Write
\[
 v(t,y)=V(s(t,y),y).
\]
The chain rule gives
$v_t=V_s\frac{b^2}{\sigma}$ and $dt=\frac{\sigma}{b^2}\,ds$. Thus
\begin{equation}\label{eq:exact-radial}
 \int_{t_0}^R\sigma|v_t|^2\,dt
 =\int_0^{s(R,y)}\sigma\frac{b^4}{\sigma^2}|V_s|^2
       \frac{\sigma}{b^2}\,ds
 =b^2\int_0^{s(R,y)}|V_s|^2\,ds.
\end{equation}
Since $\bigl(\frac{\sigma(t(s,y),y)}{b^2}\bigr)^2=1+O(e^{-2s})$, a change of variables, the fundamental theorem of calculus, and Cauchy--Schwarz give
\begin{align}\label{eq:mass-error}
 &\left|\int_{X_R\setminus K}v^2d\mu-
 \int_Mb^2\int_0^{s(R,y)}V^2ds\,dv_h\right|\notag\\
 &=\left|\int_Mb^2\int_0^{s(R,y)}
 \left[\left(\frac{\sigma(t(s,y),y)}{b^2}\right)^2-1\right]
 V^2\,ds\,dv_h\right|\notag\\
 &\le C\int_Mb^2\int_0^{s(R,y)}e^{-2s}
 \left(|V(0,y)|^2+s\int_0^{s(R,y)}|V_s(\tau,y)|^2\,d\tau\right)
 ds\,dv_h\notag\\
 &\le C\int_Mb^2\left(|V(0,y)|^2+
 \int_0^{s(R,y)}|V_s(s,y)|^2\,ds\right)dv_h.
\end{align}
The $y$-dependence of $s$ causes no loss, since the nonnegative tangential energy may be discarded:
\begin{align}
 q(v)
 &=\int_K(|dv|^2+Wv^2)\,d\mu+\int_M\int_{t_0}^R
   \left(|v_t|^2+e^{-2t}|d_yv|_{h_{e^{-t}}}^2\right)\sigma\,dt\,dv_h\notag\\
 &\ge\int_K(|dv|^2+Wv^2)\,d\mu
   +\int_M\int_{t_0}^R\sigma|v_t|^2\,dt\,dv_h\notag\\
 &=\int_K(|dv|^2+Wv^2)\,d\mu
   +\int_Mb^2\int_0^{s(R,y)}|V_s|^2\,ds\,dv_h.
   \label{eq:radial-lower}
\end{align}
All these formulas extend from smooth functions to $H_0^1(X_R)$ by density. The outer trace condition becomes $V(s(R,y),y)=0$ for almost every $y$.

\begin{lemma}\label{lem:truncated-lower-bounds}
Assume the hypotheses of Theorem~\ref{thm:balls}\textup{(ii)}, and let $\phi$ be the positive function constructed in Subsection~2.2. There exist constants $C>0$ and $R_0>t_0$, independent of $R$, such that the following statements hold for every $R\ge R_0$:
\begin{itemize}
\item If $\mathbf{R}(s)$ has a simple pole at $s=\frac{n}{2}$, then
\begin{equation}\label{eq:lower-res}
 \lambda_1(X_R)-\frac{n^2}{4}
 \ge\frac{\pi^2}{4(R+C)^2}.
\end{equation}
\item If $\mathbf{R}(s)$ is analytic at $s=\frac{n}{2}$, then
\begin{equation}\label{eq:lower-reg}
 \lambda_1(X_R)-\frac{n^2}{4}
 \ge\frac{\pi^2}{(R+C)^2}.
\end{equation}
\end{itemize}
\end{lemma}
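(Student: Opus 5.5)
The starting point is the variational characterization
\[
 \lambda_1(X_R)-\frac{n^2}{4}=\inf_{v}\frac{q(v)}{\int_{X_R}v^2\,d\mu}.
\]
We bound the numerator from below by \eqref{eq:radial-lower}, and the denominator from above by combining \eqref{eq:mass-error} with \eqref{eq:compact-res} or \eqref{eq:compact-reg}. Everything then reduces, fiberwise in $y$, to a one-dimensional Dirichlet problem on $[0,L_y]$ with $L_y=s(R,y)\le R+C$. Write
\[
 E=\int_K(|dv|^2+Wv^2)\,d\mu,\qquad
 D=\int_Mb^2\int_0^{L_y}|V_s|^2\,ds\,dv_h,
\]
so that $q(v)\ge E+D$. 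Note that $V(0,y)=v(t_0,y)$, because $s=0$ at $t=t_0$.

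\emph{Pole case.} Here $V(L_y,y)=0$ while $V(0,y)$ is free. The one-dimensional inequality with Dirichlet condition at one end and Neumann at the other gives
\[
 \int_0^{L}V^2\,ds\le\frac{4L^2}{\pi^2}\int_0^L|V_s|^2\,ds .
\]
For the boundary value, $|V(0)|^2\le L\int_0^L|V_s|^2\,ds$. Combined with \eqref{eq:compact-res}, this bounds the $K$-part of the mass by $C(E+LD)$. The collar part of the mass is at most
\[
 \frac{4(R+C)^2}{\pi^2}D+C(E+RD)
\]
by \eqref{eq:mass-error}. Hence
\[
 \int v^2\,d\mu\le\frac{4(R+C)^2}{\pi^2}D+CR\,D+CE\le\frac{4(R+C')^2}{\pi^2}\,(D+E)
\]
for $R$ large, using $CE\le R^2E$. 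This gives \eqref{eq:lower-res}.

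\emph{Analytic case.} A crude argument loses the factor $4$, so the estimate must be sharper. By \eqref{eq:compact-reg}, both $\int_Kv^2\,d\mu$ and $\int_Mb^2|V(0,y)|^2\,dv_h$ are at most $CE$. Now use the one-dimensional inequality for a function that vanishes at $L$ but takes a boundary value $a=V(0)$:
\[
 \int_0^LV^2\,ds\le(1+\varepsilon)\frac{L^2}{\pi^2}\int_0^L|V_s|^2\,ds+C_\varepsilon L\,a^2 .
\]
One can be more precise. Split $V=V_0+a\,\psi$, where $\psi(s)=1-s/L$ and $V_0$ vanishes at both ends. Then $\int V_0^2\le\frac{L^2}{\pi^2}\int|V_{0,s}|^2$. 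The cross terms give
\[
 \int_0^L V^2\,ds\le\frac{L^2}{\pi^2}\int_0^L|V_s|^2\,ds+C L\,|a|\Bigl(\int_0^L V^2\,ds\Bigr)^{1/2}+CLa^2 .
\]
To justify this, write $\int V^2=\int V_0^2+2a\int V_0\psi+a^2L/3$ and $\int|V_s|^2=\int|V_{0,s}|^2+a^2/L$, since $\int V_{0,s}\psi_s=0$. This yields
\[
 \int V^2\le\frac{L^2}{\pi^2}\int|V_s|^2+2|a|\sqrt{L}\Bigl(\int V_0^2\Bigr)^{1/2}+\frac{a^2L}{3}.
\]
Integrating in $y$ with $\int b^2a^2\le CE$, and writing $m$ for the total mass, gives
\[
 m\le\frac{(R+C)^2}{\pi^2}D+CE+C\sqrt{RE\,m}+CRE+C(E+D).
\]
Absorbing $C\sqrt{RE\,m}\le\delta m+C_\delta RE$ yields
\[
 m\le\frac{(R+C)^2}{\pi^2}(1+O(R^{-1}))(D+E),
\]
since $RE\le R^2E/\pi^2$ for large $R$. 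Adjusting the constant $C$ gives \eqref{eq:lower-reg}.

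The main obstacle is the analytic case. The error terms must be only $O(R)$ relative to an $R^2$ main term, so that they can be absorbed into $(R+C)^2$. This is precisely where the stronger estimate \eqref{eq:compact-reg}, which controls the inner boundary value by $E$, is indispensable. Without it, the inner Neumann-type freedom produces $4R^2/\pi^2$ and only the pole-case bound.
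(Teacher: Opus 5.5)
Your pole case is the paper's argument. In the analytic case your splitting $V=V_0+a\psi$ is legitimate, since $V_0\in H_0^1(0,L)$ and $\int_0^LV_{0,s}\psi_s\,ds=0$.

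The gap is the absorption step, which is exactly where the sharp constant is decided. With $\delta$ fixed, moving $\delta m$ to the left leaves $\frac{(R+C)^2}{(1-\delta)\pi^2}$ as the coefficient of $D$. You then only obtain $\lambda_1(X_R)-\frac{n^2}{4}\ge\frac{(1-\delta)\pi^2}{(R+C)^2}$. Keeping the $D$-coefficient at $\frac{(R+C)^2}{\pi^2}+O(R)$ forces $\delta\lesssim R^{-1}$. Then $C_\delta RE$ is of size $R^2E$, which the remark ``$RE\le R^2E/\pi^2$'' does not cover.

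This is not a bookkeeping issue: no inequality $m\le\bigl(\frac{(R+C)^2}{\pi^2}+KR\bigr)D+K'RE$ with $K,K'$ independent of $v$ can hold for all large $R$. For fixed $c>0$, take $V(s,y)=\sin\frac{\pi(s+c)}{s(R,y)+c}$ in the collar and extend into $K$ with minimal $E$. Then $E=O(c^2R^{-2})$ and $D=O(R^{-1})$, so $KRD+K'RE=O(1)+O(c^2R^{-1})$ with constants independent of $c$. A direct computation instead gives $m-\frac{(R+C)^2}{\pi^2}D\ge(2c-C'')\int_Mb^2\,dv_h+o(1)$, with $C''$ independent of $c$. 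So, contrary to your closing remark, the inner boundary value has to be paid for by a term of order $R^2E$; only the $D$-errors need to be $O(R)$.

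The repair is to give $E$ a coefficient of order $R^2$ with a factor strictly below $\frac{1}{\pi^2}$. Write $\|\cdot\|$ for the norm of $L^2(0,L_y)$ and $C_*$ for the constant in \eqref{eq:compact-reg}. Use $\|V_0\|\le\frac{L}{\pi}\|V_s\|$ in the cross term together with $2\sqrt{xy}\le\frac{x}{2}+2y$:
\[
 2\int_Mb^2|a|\sqrt{L}\,\|V_0\|\,dv_h
 \le\frac{2}{\pi}(R+C)^{\frac32}\sqrt{C_*E\,D}
 \le\frac{(R+C)^2}{2\pi^2}E+2C_*(R+C)D .
\]
Combined with your other terms, this gives $m\le\frac{(R+C)^2}{\pi^2}D+\frac{(R+C)^2}{2\pi^2}E+O(R)(D+E)\le\frac{(R+C')^2}{\pi^2}q(v)$, which proves \eqref{eq:lower-reg}. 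Equivalently, take $\delta=c_0/R$ with $c_0$ large in your absorption.

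The paper avoids cross terms altogether. It extends $V(\cdot,y)$ linearly to $[-C_*,0]$ so that it vanishes at $-C_*$. The added energy $\frac{1}{C_*}|V(0,y)|^2$ is paid for by $E$ through \eqref{eq:compact-reg}, and a single Dirichlet Wirtinger inequality on an interval of length $s(R,y)+C_*$ finishes the proof. That trick builds in the $R^2$-size charge on $E$ automatically, costs only the shift $R\mapsto R+C_*$, and is shorter than your decomposition.
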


\begin{proof}
\begin{itemize}
\item Suppose that $\mathbf{R}(s)$ has a simple pole at $s=\frac{n}{2}$, equivalently $N_0\ne\{0\}$. Here $W=0$. By \eqref{eq:compact-res}, \eqref{eq:mass-error}, and \eqref{eq:radial-lower}, there exist constants $C_1,C_2>0$ such that
\[
 \int_{X_R}v^2\,d\mu
 \le\int_Mb^2\left(\int_0^{s(R,y)}V^2\,ds
 +C_1|V(0,y)|^2\right)dv_h+C_2q(v).
\]
For fixed $y$, extend $V$ evenly across $s=0$ on the symmetric interval
with endpoints $\pm s(R,y)$. Since $V(s(R,y),y)=0$, the even extension
belongs to $H_0^1(-s(R,y),s(R,y))$. The Wirtinger inequality and the
Cauchy--Schwarz inequality give
\begin{align*}
 \int_0^{s(R,y)}|V|^2\,ds
 &=\frac{1}{2}\int_{-s(R,y)}^{s(R,y)}|V|^2\,ds
 \le\frac{2s(R,y)^2}{\pi^2}
       \int_{-s(R,y)}^{s(R,y)}|V_s|^2\,ds\\
 &=\frac{4s(R,y)^2}{\pi^2}
       \int_0^{s(R,y)}|V_s|^2\,ds,\\
 |V(0)|^2
 &=\left|\int_0^{s(R,y)} V_s\,ds\right|^2
 \le s(R,y)\int_0^{s(R,y)}|V_s|^2\,ds.
\end{align*}
By \eqref{eq:optical-coordinate}, there is a constant $C_3>0$ such that
$s(R,y)\le R+C_3$ uniformly. Hence \eqref{eq:radial-lower} implies
\begin{align*}
 \int_{X_R}v^2\,d\mu
 &\le\left[
 \frac{4(R+C_3)^2}{\pi^2}
 +C_1(R+C_3)+C_2\right]q(v)\\
 &\le\frac{4(R+C_4)^2}{\pi^2}q(v)
\end{align*}
for a fixed $C_4>0$ and all sufficiently large $R$. Taking the infimum of the Rayleigh quotient gives
\[
 \lambda_1(X_R)-\frac{n^2}{4}
 \ge\frac{\pi^2}{4(R+C_4)^2},
\]
which proves \eqref{eq:lower-res} after enlarging the constant in its statement.

\item Suppose that $\mathbf{R}(s)$ is analytic at $s=\frac{n}{2}$, equivalently $N_0=\{0\}$. Let $C_5>0$ be a constant for which \eqref{eq:compact-reg} holds. Combining \eqref{eq:compact-reg} with \eqref{eq:radial-lower} gives
\[
 q(v)\ge\int_Mb^2\left(
 \int_0^{s(R,y)}|V_s|^2\,ds+
 \frac{1}{C_5}|V(0,y)|^2\right)dv_h.
\]
Moreover, \eqref{eq:compact-reg}, \eqref{eq:mass-error}, and \eqref{eq:radial-lower} give a constant $C_6>0$ such that
\[
 \int_{X_R}v^2\,d\mu
 \le\int_Mb^2\int_0^{s(R,y)}V^2\,ds\,dv_h+C_6q(v).
\]
For fixed $y$, define
\[
 \widehat V(s)=
 \begin{cases}
 V(0)\left(1+\dfrac{s}{C_5}\right),&-C_5\le s\le0,\\[4pt]
 V(s),&0\le s\le s(R,y).
 \end{cases}
\]
Then $\widehat V\in H_0^1(-C_5,s(R,y))$. Applying the Wirtinger
inequality on this interval and discarding the nonnegative mass on
$[-C_5,0]$ gives
\begin{align*}
 \int_0^{s(R,y)}|V|^2\,ds
 &\le\int_{-C_5}^{s(R,y)}|\widehat V|^2\,ds\\
 &\le\frac{(s(R,y)+C_5)^2}{\pi^2}
 \int_{-C_5}^{s(R,y)}|\widehat V_s|^2\,ds\\
 &=\frac{(s(R,y)+C_5)^2}{\pi^2}
 \left(\int_0^{s(R,y)}|V_s|^2\,ds+
 \frac{1}{C_5}|V(0)|^2\right).
\end{align*}
Using $s(R,y)\le R+C_3$, integrating over $M$, and applying the preceding two inequalities, we obtain
\begin{align*}
 \int_{X_R}v^2\,d\mu
 &\le\left[\frac{(R+C_3+C_5)^2}{\pi^2}+C_6\right]q(v)\\
 &\le\frac{(R+C_7)^2}{\pi^2}q(v)
\end{align*}
for a fixed $C_7>0$ and all sufficiently large $R$. Taking the infimum gives
\[
 \lambda_1(X_R)-\frac{n^2}{4}
 \ge\frac{\pi^2}{(R+C_7)^2},
\]
which proves \eqref{eq:lower-reg} after enlarging the constant in its statement.
\end{itemize}
\end{proof}

We next obtain matching upper bounds by elementary test functions. In the pole case take
\[
 v(t,y)=\cos\left(\frac{\pi(t-t_0)}{2(R-t_0)}\right)
 \quad(t_0<t<R),\qquad v=1\quad\text{on }K.
\]
In the analytic case take
\[
 v(t,y)=\sin\left(\frac{\pi(t-t_0)}{R-t_0}\right)
 \quad(t_0<t<R),\qquad v=0\quad\text{on }K.
\]
In both cases the traces agree at $t=t_0$, the function vanishes at $t=R$, and hence $v\in H_0^1(X_R)$. In the pole case its core energy vanishes because $W=0$ and $v$ is constant on $K$. In the analytic case both its core gradient energy and its $W$-energy vanish. The possible jump in the first normal derivative at $t=t_0$ is allowed in $H^1$ and produces no additional term in the quadratic form.

Write $c(X)=\frac{\pi^2}{4}$ in the pole case and $c(X)=\pi^2$ in the analytic case. For the corresponding test function, direct integration gives
\[
 \int_{t_0}^R v^2\,dt=\frac{R-t_0}{2},\qquad
 \int_{t_0}^R|v_t|^2\,dt=\frac{c(X)}{2(R-t_0)}.
\]
There is no tangential energy. Since $|v|\le1$ and
$|v_t|\le\frac{\pi}{R-t_0}$, \eqref{eq:weighted-density} yields
\begin{align*}
 \left|\int_{X_R\setminus K}v^2\,d\mu
       -\frac{R-t_0}{2}\int_Mb^2\,dv_h\right|
 &\le C\int_{t_0}^\infty e^{-2t}\,dt\le C,\\
 \left|q(v)-\frac{c(X)}{2(R-t_0)}\int_Mb^2\,dv_h\right|
 &\le\frac{C}{(R-t_0)^2}\int_{t_0}^\infty e^{-2t}\,dt
 \le\frac{C}{(R-t_0)^2}.
\end{align*}
The core mass is either $\mu(K)$ or zero, and hence only contributes $O(1)$. Because $\int_Mb^2\,dv_h>0$, it follows that
\begin{align*}
 \lambda_1(X_R)-\frac{n^2}{4}
 &\le\frac{q(v)}{\displaystyle\int_{X_R}v^2\,d\mu}\\
 &=\frac{\displaystyle\frac{c(X)}{2(R-t_0)}
                     \int_Mb^2\,dv_h+O(R^{-2})}
          {\displaystyle\frac{R-t_0}{2}\int_Mb^2\,dv_h+O(1)}\\
 &=\frac{c(X)}{(R-t_0)^2}+O(R^{-3})
  =\frac{c(X)}{R^2}+O(R^{-3}).
\end{align*}
The lower bounds \eqref{eq:lower-res}--\eqref{eq:lower-reg} give
$\lambda_1(X_R)-\frac{n^2}{4}\ge\frac{c(X)}{R^2}-CR^{-3}$.
Together these estimates prove
\[
 \lambda_1(X_R)=\frac{n^2}{4}+\frac{c(X)}{R^2}+O(R^{-3}).
\]
The two constants are distinct, and the two resolvent cases exhaust all possibilities. Once we pass from $X_R$ to balls, this also proves both reverse implications in \eqref{eq:ball-dichotomy}.

\begin{proof}[Completion of the proof of Theorem~\ref{thm:balls}\textup{(ii)}]
For every fixed $o$, the normal coordinate satisfies
$|d_{g^+}(o,q)-t(q)|\le C_o$ on the collar. Indeed, following a normal ray from the compact section $\{t=t_0\}$ gives the upper bound. For the lower bound, use $|dt|_{g^+}=1$ on the portion of a path after its last crossing of that section. If $o$ itself lies in the collar and a path never crosses the section, the same estimate starts at $o$ and gives $d_{g^+}(o,q)\ge t(q)-t(o)$. Enlarging $C_o$ to control the fixed compact region, we obtain, for large $R$,
\[
 X_{R-C_o}\subset B(o,R)\subset X_{R+C_o}.
\]
For open sets $\Omega_1\subset\Omega_2$, extension by zero embeds $H_0^1(\Omega_1)$ into $H_0^1(\Omega_2)$ and preserves the Rayleigh quotient. Thus domain monotonicity gives
\[
 \lambda_1(X_{R+C_o})\le\lambda_1(B(o,R))
 \le\lambda_1(X_{R-C_o}).
\]
For either sign, the already proved expansion and a fixed-radius shift imply
\begin{align*}
 \lambda_1(X_{R\pm C_o})
 &=\frac{n^2}{4}+\frac{c(X)}{(R\pm C_o)^2}
       +O\bigl((R\pm C_o)^{-3}\bigr)\\
 &=\frac{n^2}{4}+\frac{c(X)}{R^2}+O(R^{-3}).
\end{align*}
Squeezing between these bounds proves \eqref{eq:ball-dichotomy}. In particular, $c(X)$ is independent of $o$, although the error bound may depend on $o$. The argument uses only the variational definition on $H_0^1$. It does not require smoothness of the distance function or of the geodesic-sphere boundary.
\end{proof}

The proof of \cite[Theorem 3.4]{GQ} also gives $S(\frac{n}{2})=-I+2\Pi$, where $\Pi$ projects onto boundary traces of the resolvent residue. Nonnegative Yamabe infinity forces $\Pi=0$. Our variational proof converts the resolvent alternative into the two large-ball coefficients without angular scattering estimates.

\begin{remark}\label{rem:alternative-ball-lower-bound}
The lower bounds for $\lambda_1(X_R)$ also admit a proof using positive functions and Barta's inequality. In the analytic case, take the nonnegative $f\in C_c^\infty(X)$ fixed in Subsection~2.2, choose a sufficiently large constant $C>0$, and set
\[
 r=\frac{\pi}{R+C},\qquad
 \psi=-\operatorname{Im}\left[\mathbf R\left(\frac n2+ir\right)f\right].
\]
By a careful analysis of the resolvent near the threshold, one can prove that $\psi>0$ on $\overline{X_R}$ for a suitable fixed $C$ and all sufficiently large $R$. Since $-\Delta_{g^+}\psi=(\frac{n^2}{4}+r^2)\psi$, Barta's inequality then gives $\lambda_1(X_R)\ge\frac{n^2}{4}+\frac{\pi^2}{(R+C)^2}$.

In the pole case, normalize the positive resonant state $\phi$ by $\max_X\phi=1$. Choose a fixed $c>\frac{\pi}{n}$ and put
\[
 a=\frac{\pi}{2(R+C)}-\frac{c}{(R+C)^2},\qquad
 \psi=\phi\sin\left[a\left(R+C+\frac2n\ln\phi\right)\right].
\]
We use the boundary expansion of $\phi$ and the sharp logarithmic gradient estimate $|d\ln\phi|_{g^+}\le\frac n2$~\cite[Corollary 2.3]{SW}. A careful calculation shows that, for sufficiently large $C$ and $R$, $\psi$ is positive on $\{\phi>e^{-n(R+C)/2}\}\supset\overline{X_R}$, vanishes at its boundary, and satisfies $-\Delta_{g^+}\psi/\psi\ge\frac{n^2}{4}+a^2$ there. Barta's inequality and domain monotonicity then give
\[
 \lambda_1(X_R)\ge\frac{n^2}{4}+a^2
 =\frac{n^2}{4}+\frac{\pi^2}{4R^2}+O(R^{-3}).
\]
Increasing the fixed constant in \eqref{eq:lower-res} absorbs the term
$\frac{c}{(R+C)^2}$ in $a$ and yields the lower bound in
Lemma~\ref{lem:truncated-lower-bounds}. We omit the detailed positivity
and differential-inequality computations.
\end{remark}

\subsection{Negative Yamabe examples}\label{sec:examples}
We first explain how Theorem~\ref{thm:balls} replaces a separate large-ball estimate for convex cocompact hyperbolic quotients. For a torsion-free non-elementary convex cocompact group $\Gamma$ with nonempty domain of discontinuity $\Omega_\Gamma$, let
\[
 \delta_\Gamma=\inf\left\{s>0:\sum_{\gamma\in\Gamma}e^{-s d(o,\gamma o)}<\infty\right\}.
\]
The quotient $X=\Gamma\backslash\mathbb H^{n+1}$ is smoothly conformally compact. The relation between the critical exponent and the bottom of the spectrum is classical; see Sullivan~\cite{Sullivan}.

The following proposition applies Theorem~\ref{thm:balls} to the classical relations between the critical exponent, the Yamabe sign, and the resolvent. Its large-ball expansions follow from these relations and our theorem.

\Needspace{10\baselineskip}
\begin{proposition}\label{prop:critical-exponent-balls}
Assume $n\ge3$. For every fixed $o\in X$, the following hold.
\begin{enumerate}
\item If
\begin{equation}\label{eq:window}
 \frac{n-2}{2}<\delta_\Gamma<\frac{n}{2},
\end{equation}
then $Y(M,[h])<0$, $\lambda_1(X)=\frac{n^2}{4}$, and \eqref{eq:counterexample} holds.
\item If $\delta_\Gamma=\frac{n}{2}$, then $Y(M,[h])<0$, $\lambda_1(X)=\frac{n^2}{4}$, and
\[
 \lambda_1(B(o,R))=\frac{n^2}{4}+\frac{\pi^2}{4R^2}+O(R^{-3}).
\]
\end{enumerate}
\end{proposition}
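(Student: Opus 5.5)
The plan is to reduce both parts to Theorem~\ref{thm:balls}\textup{(ii)}, using three classical inputs for convex cocompact quotients: Sullivan's formula for the spectral bottom, the Schoen--Yau/Nayatani identification of the Yamabe sign, and the threshold behavior of the resolvent.

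\emph{Sullivan's formula.} The first input~\cite{Sullivan} gives, for convex cocompact $\Gamma$,
\[
 \lambda_1(X)=\begin{cases}\frac{n^2}{4},&\delta_\Gamma\le\frac n2,\\[2pt] \delta_\Gamma(n-\delta_\Gamma),&\delta_\Gamma>\frac n2.\end{cases}
\]
In both cases \eqref{eq:window} and $\delta_\Gamma=\frac n2$, this yields $\lambda_1(X)=\frac{n^2}{4}$. Theorem~\ref{thm:balls}\textup{(ii)} therefore applies, since $X$ is smoothly conformally compact hyperbolic, hence CCE.

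\emph{Yamabe sign.} The second input is Schoen--Yau / Nayatani. The conformal infinity $\Gamma\backslash\Omega_\Gamma$ carries the canonical Nayatani metric, whose scalar curvature has the sign of $\delta_\Gamma-\frac{n-2}{2}$. Consequently $Y(M,[h])$ has the sign of $\frac{n-2}{2}-\delta_\Gamma$ (Nayatani, Schoen--Yau). So $\delta_\Gamma>\frac{n-2}{2}$ forces $Y<0$ in both parts.

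\emph{Threshold resolvent.} The third input decides which alternative of \eqref{eq:ball-dichotomy} occurs. By Patterson--Sullivan theory and the work of Patterson/Perry and Guillarmou on convex cocompact quotients, the Patterson--Sullivan measure $\nu$ on the limit set produces the positive $\Gamma$-invariant solution
\[
 \phi_\delta(z)=\int_{\Lambda_\Gamma}P(z,\xi)^{\delta_\Gamma}\,d\nu(\xi)
\]
of $-\Delta\phi_\delta=\delta_\Gamma(n-\delta_\Gamma)\phi_\delta$. Away from the limit set, the Poisson kernel decays like $x^{\delta_\Gamma}$, so $\phi_\delta=x^{\delta_\Gamma}G$ with $G$ smooth up to the boundary and positive on $M$.
\begin{itemize}
\item[(a)] When $\delta_\Gamma=\frac n2$, the function $\phi_\delta$ lies in $x^{\frac n2}C^\infty(\overline X)$ and solves $P\phi_\delta=0$, so $N_0\neq\{0\}$. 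By \cite[Lemma 3.2]{GQ}, $\mathbf R$ has a pole at $\frac n2$, and the second line of \eqref{eq:ball-dichotomy} gives the coefficient $\frac{\pi^2}{4}$.
\item[(b)] When $\delta_\Gamma<\frac n2$, I must show $N_0=\{0\}$. Suppose $0\neq u\in N_0$, and write $u=x^{\frac n2}(b+O(x^2))$ as in Lemma~\ref{lem:positive-boundary-coefficient}.
\end{itemize}

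For case (b), the plan is a Green's-identity pairing on $X_R$ between $u$ and the positive function $\phi_\delta$, which satisfies $(P+(\frac n2-\delta_\Gamma)^2)\phi_\delta=0$. The pairing gives
\[
 \Bigl(\tfrac n2-\delta_\Gamma\Bigr)^2\int_{X_R}u\,\phi_\delta\,dv
 =\int_{\partial X_R}\bigl(u\,\partial_\nu\phi_\delta-\phi_\delta\,\partial_\nu u\bigr)\,dA .
\]
On $\{t=R\}$ the boundary integrand is of order $e^{nR}\cdot e^{-\frac n2 R}e^{-\delta_\Gamma R}=e^{(\frac n2-\delta_\Gamma)R}$, and the left-hand integral grows at the same exponential rate. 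Matching the leading coefficients then forces
\[
 \int_M b\,G\,(\text{factor})\,dv_h=0 .
\]
This alone does not give a contradiction, because $b$ need not have a sign.

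A cleaner alternative is to quote the known result directly: for convex cocompact quotients with $\delta_\Gamma<\frac n2$, the resolvent of $\Gamma\backslash\mathbb H^{n+1}$ is holomorphic on $\operatorname{Re}s>\delta_\Gamma$ (Patterson; Guillarmou, \emph{Resonances and scattering poles}). In particular $\mathbf R$ is analytic at $\frac n2$. I will use this cited result rather than the pairing, and the first line of \eqref{eq:ball-dichotomy} then yields \eqref{eq:counterexample}.

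The main obstacle is exactly this step: certifying regularity of $\mathbf R$ at $\frac n2$ when $\delta_\Gamma<\frac n2$ lies strictly inside the window. If I want to avoid citing Patterson's result, I would argue as follows:
\begin{itemize}
\item a nonzero resonant state would, by the residue structure of $\mathbf R$, produce a positive one (as in Subsection~2.2);
\item its growth would force the Poincar\'e series to diverge at exponent $\frac n2$;
\item divergence at $\frac n2$ contradicts $\delta_\Gamma<\frac n2$.
\end{itemize}
Existence of groups in the window and at $\delta_\Gamma=\frac n2$ (e.g. Schottky groups and continuity of $\delta_\Gamma$ in deformations) is deferred to the examples that follow.
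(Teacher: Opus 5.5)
Your proposal is correct and follows the paper's proof essentially step for step: Sullivan's formula gives $\lambda_1(X)=\frac{n^2}{4}$, Nayatani's canonical metric gives $Y(M,[h])<0$, Patterson's holomorphy of $\mathbf R(s)$ on $\operatorname{Re}s>\delta_\Gamma$ handles the window, and the Patterson--Sullivan Poisson integral in $x^{\frac n2}C^\infty(\overline X)$ together with \cite[Lemma 3.2]{GQ} gives the pole at $\delta_\Gamma=\frac n2$, after which Theorem~\ref{thm:balls}\textup{(ii)} finishes both parts; your abandoned pairing argument and the closing sketch are not needed. One slip to fix: the scalar curvature of Nayatani's metric has the sign of $n-2-2\delta_\Gamma$, not of $\delta_\Gamma-\frac{n-2}{2}$ as you first wrote, since with your sign the window would produce a positive-scalar-curvature representative and hence $Y(M,[h])>0$, contradicting your own (correct) conclusion.
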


\begin{proof}
The canonical metric on the conformal boundary has scalar-curvature sign equal to the sign of $n-2-2\delta_\Gamma$; see Nayatani~\cite{Nayatani} and the explicit formula in Izeki--Nayatani~\cite[(2.1)]{IN}. This is negative in both cases, so $Y(M,[h])<0$. Sullivan's spectral formula~\cite[Theorem 2.17]{Sullivan} gives $\lambda_1(X)=\frac{n^2}{4}$ whenever $\delta_\Gamma\le\frac{n}{2}$.

First suppose $\delta_\Gamma<\frac{n}{2}$. Patterson's resolvent theorem, as stated in \cite[Proposition 2.3]{GN}, implies that $\mathbf R(s)$ is analytic for $\Re s>\delta_\Gamma$, in particular at $s=\frac{n}{2}$. Theorem~\ref{thm:balls}\textup{(ii)} gives \eqref{eq:counterexample}.

Now suppose $\delta_\Gamma=\frac{n}{2}$. Let $\mu_\Gamma$ be a nonzero Patterson--Sullivan conformal measure supported on the limit set $\Lambda_\Gamma$. Its Poisson integral
\[
 u_\delta(q)=\int_{\Lambda_\Gamma}\mathcal P(\widetilde q,\xi)^{\delta_\Gamma}\,d\mu_\Gamma(\xi)
\]
descends to a positive function on $X$ satisfying $(-\Delta_{g^+}-\frac{n^2}{4})u_\delta=0$; see \cite[Theorem 2.19]{Sullivan}. Near any point of the domain of discontinuity, the boundary variable stays a positive distance from $\Lambda_\Gamma$. In an upper half-space chart the Poisson kernel is $x/(x^2+|y-\xi|^2)$ up to a smooth positive factor depending on $\xi$. Hence $u_\delta=x^{\frac{n}{2}}F$, where $F\in C^\infty(\overline X)$ and $F|_M>0$. By \cite[Lemma 3.2]{GQ}, the continued resolvent has a pole at $\frac{n}{2}$. Theorem~\ref{thm:balls}\textup{(ii)} gives the stated expansion.
\end{proof}

Lee~\cite[Introduction]{Lee} discussed the interval \eqref{eq:window} and examples obtained by extending lower-dimensional cocompact lattices. For even boundary dimensions, we use the convex cocompact groups with critical exponents approaching the boundary dimension from below constructed in Magee~\cite[Section 6, pp.~182--184]{Magee} (following a construction communicated by McMullen).

\begin{proof}[Proof of Remark~\ref{prop:counterexample}, part~1]
If $n=2k+1$, where $k\ge1$, take a closed hyperbolic manifold $(N^{k+1},h_N)$ and extend its lattice trivially from $\mathbb H^{k+1}$ to $\mathbb H^{2k+2}$. Its critical exponent is $\delta_\Gamma=k$, and
\[
 \frac{n-2}{2}<\delta_\Gamma=k<\frac{n}{2}.
\]
Proposition~\ref{prop:critical-exponent-balls}\textup{(1)} gives $Y(M,[h])<0$ and \eqref{eq:counterexample}.

Furthermore, the quotient has the explicit metric
\begin{equation}\label{eq:explicitcounter}
 g^+=dt^2+\sinh^2t\,g_{\mathbb S^k}+\cosh^2t\,h_N,
 \qquad t\ge0.
\end{equation}
The sphere collapses smoothly at $t=0$. The manifold is diffeomorphic to $\mathbb R^{k+1}\times N$. It has constant sectional curvature $-1$. Near infinity, set $x=2e^{-t}$ and extend it as a positive smooth function over the interior. In this collar, the compactification is
\[
 x^2g^+=dx^2+\left(1-\frac{x^2}{4}\right)^2g_{\mathbb S^k}
 +\left(1+\frac{x^2}{4}\right)^2h_N.
\]
The boundary representative $g_{\mathbb S^k}+h_N$ has scalar curvature
$k(k-1)-k(k+1)=-2k<0$.

If $n=2k$, then $k\ge2$. Apply Magee's construction~\cite[Section 6, pp.~182--184]{Magee} in $\mathbb H^{k+1}$ (his dimension parameter is $k$). It gives torsion-free convex cocompact subgroups $\Gamma_j$ of a cocompact lattice with $\delta_{\Gamma_j}\to k$ from below. Indeed, their convex cores are compact and their limit sets have Hausdorff dimensions approaching $k$; the Patterson--Sullivan dimension formula identifies these dimensions with $\delta_{\Gamma_j}$. Since these groups have nonempty domains of discontinuity, $\delta_{\Gamma_j}<k$. Choose $j$ so large that $k-1<\delta_{\Gamma_j}<k$. Embed $\mathbb H^{k+1}$ totally geodesically in $\mathbb H^{2k+1}$ and extend the action by the identity on the normal directions. The subgroup remains torsion-free and convex cocompact. Its orbit distances are unchanged, so its critical exponent is also unchanged. Thus
\[
 \frac{n-2}{2}=k-1<\delta_\Gamma<k=\frac{n}{2}.
\]
Proposition~\ref{prop:critical-exponent-balls}\textup{(1)} again gives $Y(M,[h])<0$ and \eqref{eq:counterexample} for every fixed center.
\end{proof}

There is also a simple negative-Yamabe example in the resonant branch. Let $(N^3,h_N)$ be a closed hyperbolic manifold and put, on $\mathbb R^2\times N$,
\begin{equation}\label{eq:resonant-example}
 g^+=dt^2+\sinh^2t\,d\theta^2+\cosh^2t\,h_N,\qquad t\ge0.
\end{equation}
The circle collapses smoothly at zero, and the warped-product curvature formulas give sectional curvature $-1$. Near infinity, set $x=2e^{-t}$ and extend it as a positive smooth function over the interior. In this collar,
\[
 x^2g^+=dx^2+\left(1-\frac{x^2}{4}\right)^2d\theta^2
 +\left(1+\frac{x^2}{4}\right)^2h_N.
\]
Here the group is the lattice of $N^3$, extended from $\mathbb H^3$ to $\mathbb H^5$, and $\delta_\Gamma=2=\frac{n}{2}$. Proposition~\ref{prop:critical-exponent-balls}\textup{(2)} therefore gives $Y(M,[h])<0$, $\lambda_1(X)=4$, and
\[
 \lambda_1(B(o,R))=4+\frac{\pi^2}{4R^2}+O(R^{-3}).
\]
The boundary metric $d\theta^2+h_N$ has scalar curvature $-6$. An explicit threshold resonant state is $u(t)=\cosh^{-2}t$: in the collar it takes the form
\[
 u=x^2\left(1+\frac{x^2}{4}\right)^{-2}
\]
and a direct calculation gives
\[
 \Delta_{g^+}u=u''+(\coth t+3\tanh t)u'=-4u.
\]
Thus $Pu=0$ and $u\in x^2C^\infty(\overline X)$. This one-ended example with connected conformal infinity realizes the second branch of the theorem, whereas Remark~\ref{prop:counterexample} realizes the first. The existence of a solution to $Pu=0$ alone does not distinguish these cases. Its boundary behavior is essential.

\subsection{Other coefficients under a Ricci lower bound}\label{sec:ricci-examples}
For a complete $(n+1)$-manifold $(X,g)$ with $\Ric_g\ge-ng$ and $\lambda_1(X)=\frac{n^2}{4}$, Cheng's comparison~\cite{Cheng} and Savo's hyperbolic ball asymptotics~\cite{Savo} give
\begin{equation}\label{eq:general-upper}
 0\le\lambda_1(B(o,R))-\frac{n^2}{4}
 \le\frac{\pi^2}{R^2}+O(R^{-3}).
\end{equation}
Thus $R\bigl(\lambda_1(B(o,R))-\frac{n^2}{4}\bigr)\to0$, without assuming a full expansion. The next coefficient, when it exists, lies in $[0,\pi^2]$. We now realize a continuous interval of values different from the two in Theorem~\ref{thm:balls}, in every dimension at least three.

Fix $n\ge2$ and $0<\kappa<\frac{1}{4}$. On $\mathbb R^{n+1}$ define the rotationally symmetric metric
\begin{equation}\label{eq:ricci-metric}
 g_\kappa=dr^2+f(r)^2g_{\mathbb S^n},\qquad
 f(r)=\sinh r\exp\left[\frac{2\kappa}{n^2}
 \left(\frac{1}{\sqrt{1+r^2}}-1\right)\right].
\end{equation}
The function $f$ is smooth and odd at zero, with $f'(0)=1$, so the metric extends smoothly over the pole $o_0$ and is complete. We prove
\begin{equation}\label{eq:ricci-asymptotic}
 \lambda_1(\mathbb R^{n+1},g_\kappa)=\frac{n^2}{4},\qquad
 \lambda_1(B(o,R))=\frac{n^2}{4}
 +\frac{\left(j_{\sqrt{\frac{1}{4}-\kappa},1}\right)^2}{R^2}+o(R^{-2})
\end{equation}
for every fixed center $o$, where $j_{\nu,1}$ is the first positive zero of the Bessel function $J_\nu$.

First we check curvature. Put $b(r)=\frac{r}{(1+r^2)^{\frac{3}{2}}}$. Then
\[
 \frac{f'}f=\coth r-\frac{2\kappa}{n^2}b>0,\qquad
 \frac{f''}f=1-\frac{2\kappa}{n^2}(2b\coth r+b')
 +\frac{4\kappa^2}{n^4}b^2.
\]
The elementary inequality $2b\coth r+b'\ge b^2$ holds for all $r>0$. Indeed, multiplication by $(1+r^2)^{\frac{5}{2}}$ reduces it to
\[
 2r(1+r^2)\coth r+1-2r^2\ge\frac{r^2}{\sqrt{1+r^2}}.
\]
For $r\le1$ the left side is at least $3$, by $r\coth r\ge1$; for $r\ge1$ it is at least $2r^3-2r^2+2r+1\ge2r+1$, by $\coth r\ge1$. Both bounds prove the inequality. Since $\frac{2\kappa}{n^2}<1$, we have $f''\le f$. Integrating
$(f'^2-f^2)'=2f'(f''-f)\le0$ gives $f'^2\le1+f^2$. The radial and tangential sectional curvatures are therefore
\[
 -\frac{f''}f\ge-1,\qquad \frac{1-f'^2}{f^2}\ge-1,
\]
and in particular $\Ric_{g_\kappa}\ge-ng_\kappa$.

Rotational invariance and simplicity make the first ball eigenfunction at $o_0$ radial. The unitary change $v=f^{\frac{n}{2}}u$ transforms its shifted equation into the Friedrichs problem for $-\partial_r^2+Q(r)$ on $(0,R)$, where
\begin{align}\label{eq:inverse-square}
 Q(r)&=\frac{(f^{\frac{n}{2}})''}{f^{\frac{n}{2}}}-\frac{n^2}{4}\notag\\
 &=\frac{n(n-2)}{4\sinh^2r}-\frac{\kappa}{n}b'
 -\kappa b\coth r+\frac{\kappa^2}{n^2}b^2\notag\\
 &=-\frac{\kappa}{r^2}+O(r^{-3}).
\end{align}
Crucially, the pointwise bound $Q(r)\ge-\frac{\kappa}{r^2}$ holds on the whole interval. To see it, use
$\coth r\le\frac{\sqrt{1+r^2}}{r}$ and $b'\le(1+r^2)^{-\frac{5}{2}}$ to obtain
\[
 b\coth r+\frac{1}{n} b'
 \le\frac{1}{1+r^2}+\frac{1}{n(1+r^2)^{\frac{5}{2}}}
 \le\frac{1}{r^2};
\]
the last inequality follows from $r^2\le n(1+r^2)^{\frac{3}{2}}$. The other terms in \eqref{eq:inverse-square} are nonnegative.

Hardy's inequality gives
\begin{equation}\label{eq:Hardy-model}
 \int_0^R\left(|v'|^2+Qv^2\right)dr
 \ge(1-4\kappa)\int_0^R|v'|^2dr\ge0,
 \qquad v\in H_0^1(0,R).
\end{equation}
Near zero, $Q(r)=\frac{n(n-2)}{4r^2}+O(1)$, so Hardy's inequality also identifies the Friedrichs form domain with $H_0^1(0,R)$. It follows that every centered ball has eigenvalue at least $\frac{n^2}{4}$. Exhaustion and \eqref{eq:general-upper}, with its upper comparison independent of the assumption on $\lambda_1$, prove the first assertion of \eqref{eq:ricci-asymptotic}.

After scaling $r=Rs$, the shifted eigenvalue satisfies
\[
 R^2\left(\lambda_1(B(o_0,R))-\frac{n^2}{4}\right)
 =\inf\limits_{0\ne v\in H_0^1(0,1)}
 \frac{\int_0^1\bigl(|v'|^2+R^2Q(Rs)v^2\bigr)ds}
 {\int_0^1v^2ds}.
\]
The pointwise lower bound for $Q$ bounds this quotient below by the first Friedrichs eigenvalue of $-\partial_s^2-\frac{\kappa}{s^2}$. Conversely, for any fixed test function in $C_c^\infty(0,1)$, \eqref{eq:inverse-square} gives convergence of the quotient to the corresponding one for this limiting operator. Density in $H_0^1$, justified by $\kappa<\frac{1}{4}$ and Hardy's inequality, proves convergence of the infima. The limiting operator is the classical Bessel model; see Watson~\cite{Watson}. Its first eigenvalue is $(j_{\sqrt{\frac{1}{4}-\kappa},1})^2$, and its first eigenfunction is proportional to
$\sqrt{s}\,J_{\sqrt{\frac{1}{4}-\kappa}}(j_{\sqrt{\frac{1}{4}-\kappa},1}s)$.
This proves \eqref{eq:ricci-asymptotic} at $o_0$. Domain monotonicity between balls with radii $R\pm d(o,o_0)$ proves it for every fixed center $o$.

As $\kappa$ varies in $(0,\frac{1}{4})$, the coefficients fill
$((j_{0,1})^2,\pi^2)$, since the first Friedrichs eigenvalue is continuous and strictly decreasing in $\kappa$, with endpoint values $(j_{0,1})^2$ and $(j_{\frac{1}{2},1})^2=\pi^2$. Here we use the continuity and strict increase of $j_{\nu,1}$ with respect to $\nu\ge0$; see \cite[pp.~508--510]{Watson}. In particular they differ from both $\pi^2$ and $\frac{\pi^2}{4}$. These examples are not Einstein: $f''<f$ for $r>0$, so their radial Ricci curvature is strictly greater than $-n$, whereas it tends to $-n$ at infinity. These metrics prove the assertion under a Ricci lower bound in Remark~\ref{prop:counterexample}.

\section{Sharp large-ball asymptotics for the \texorpdfstring{$p$}{p}-Laplacian}\label{sec:p-balls}
We prove Theorem~\ref{thm:p-balls} for a fixed $p\in(1,\infty)$. Write
$\Delta_pu=\operatorname{div}_{g^+}(|du|_{g^+}^{p-2}\nabla u)$.
For the lower bound, we use Sakurai's one-dimensional comparison theorem~\cite[Theorem 1.6]{Sakurai} and calculate the precise asymptotics of its model eigenvalue. For the upper bound, we refine the hyperbolic test function of
\cite[Lemma 4.2]{Jinp}.

\begin{proof}[Proof of Theorem~\ref{thm:p-balls}]
\emph{The one-dimensional model.}
For $L>0$, denote by $\mu_L$ the first eigenvalue of
\begin{equation}\label{eq:p-model}
 \begin{cases}
 \bigl(e^{nt}|\varphi'|^{p-2}\varphi'\bigr)'
       +\mu_L e^{nt}\varphi^{p-1}=0,&0<t<L,\\
 \varphi'(0)=0,\qquad \varphi(L)=0.
 \end{cases}
\end{equation}
This is the unweighted case of the model in \cite[(1.6)]{Sakurai}.

Its first eigenfunction $\varphi$ is positive on $[0,L)$, and
integration of the equation from $0$ to $t$ shows that $\varphi'<0$
on $(0,L)$. Put
\[
 z=\left(-\frac{\varphi'}{\varphi}\right)^{p-1},\qquad
 |\varphi'|^{p-2}\varphi'=-z\varphi^{p-1}.
\]
Substitution into \eqref{eq:p-model} gives
\[
 -\bigl(e^{nt}z\varphi^{p-1}\bigr)'
 =-e^{nt}\varphi^{p-1}
   \left(z'+nz-(p-1)z^{\frac{p}{p-1}}\right)
 =-\mu_Le^{nt}\varphi^{p-1}.
\]
Thus
\[
 z'+nz-(p-1)z^{\frac{p}{p-1}}=\mu_L,\qquad
 z(0)=0,\qquad z(L^-)=+\infty.
\]
Introduce
\[
 \delta=\mu_L-\left(\frac{n}{p}\right)^p,\qquad
 \Phi(z)=\left(\frac{n}{p}\right)^p-nz
               +(p-1)z^{\frac{p}{p-1}},
 \qquad z'=\delta+\Phi(z).
\]
Direct differentiation gives
\[
 \Phi'(z)=-n+pz^{\frac{1}{p-1}},\qquad
 \Phi''(z)=\frac{p}{p-1}z^{\frac{2-p}{p-1}}>0,
 \qquad
 \Phi'''(z)=\frac{p(2-p)}{(p-1)^2}
                z^{\frac{3-2p}{p-1}}\quad(z>0).
\]
Thus $\Phi$ has its unique minimum at
$z_0=\left(\frac{n}{p}\right)^{p-1}$, where it vanishes. The trajectory
must cross this point in finite time, so $\delta>0$. Therefore $z'>0$,
and changing variables from $t$ to $z$ gives
\begin{equation}\label{eq:p-travel-time}
 L=\int_0^\infty\frac{dz}{\delta+\Phi(z)}.
\end{equation}
Since
\eqref{eq:p-travel-time} is uniformly bounded when $\delta$ stays
away from zero, $\delta\to0$ as $L\to\infty$.

For the exact remainder, write $w=z-z_0$. Taylor's formula, including
the cubic term, gives
\begin{align*}
 \Phi(z_0+w)
 &=\frac{p}{2(p-1)}\left(\frac{n}{p}\right)^{2-p}w^2
  +\frac{p(2-p)}{6(p-1)^2}
       \left(\frac{n}{p}\right)^{3-2p}w^3+O(w^4).
\end{align*}
On a fixed neighborhood of this zero, set
\[
 s=\operatorname{sgn}(w)\sqrt{\Phi(z)}.
\]
The sign makes $s$ a smooth coordinate on both sides of the zero.
Taking the square root of the preceding Taylor expansion gives
$$\begin{cases}
  s=\sqrt{\frac{p}{2(p-1)}}
   \left(\frac{n}{p}\right)^{\frac{2-p}{2}}w
   \left[1+\frac{2-p}{6(p-1)}
       \left(\frac{n}{p}\right)^{1-p}w+O(w^2)\right]\\
   \frac{ds}{dz}
 =\sqrt{\frac{p}{2(p-1)}}
   \left(\frac{n}{p}\right)^{\frac{2-p}{2}}
   \left[1+\frac{2-p}{3(p-1)}
       \left(\frac{n}{p}\right)^{1-p}w+O(w^2)\right]\\
 w=\sqrt{\frac{2(p-1)}{p}}
   \left(\frac{n}{p}\right)^{\frac{p-2}{2}}s+O(s^2)\\
 \frac{dz}{ds}=\left(\frac{ds}{dz}\right)^{-1}
  =\sqrt{\frac{2(p-1)}{p}}
   \left(\frac{n}{p}\right)^{\frac{p-2}{2}}
   -\frac{2(2-p)}{3n}s+O(s^2).
\end{cases}$$

Choose a sufficiently small fixed $\varepsilon>0$ so that $s$ has a
smooth local inverse $z=z(s)$ on $[-\varepsilon,\varepsilon]$, and put
$z_-=z(-\varepsilon)$ and $z_+=z(\varepsilon)$. Then
$0<z_-<z_0<z_+$, and changing variables only on $[z_-,z_+]$
splits \eqref{eq:p-travel-time} exactly as
\[
 L=\int_0^{z_-}\frac{dz}{\delta+\Phi(z)}
   +\int_{-\varepsilon}^{\varepsilon}
        \frac{dz/ds}{\delta+s^2}\,ds
   +\int_{z_+}^{\infty}\frac{dz}{\delta+\Phi(z)}.
\]
The two outer integrals are uniformly bounded for $\delta>0$:
\[
 0\le\int_0^{z_-}\frac{dz}{\delta+\Phi(z)}
   +\int_{z_+}^{\infty}\frac{dz}{\delta+\Phi(z)}
 \le\int_0^{z_-}\frac{dz}{\Phi(z)}
   +\int_{z_+}^{\infty}\frac{dz}{\Phi(z)}<\infty.
\]
Indeed, $\Phi$ has no zeros on these intervals and grows like
$z^{\frac{p}{p-1}}$ at infinity; the last exponent exceeds one.
For the middle integral, the preceding expansion of $dz/ds$ gives
\begin{align*}
 \int_{-\varepsilon}^{\varepsilon}
       \frac{dz/ds}{\delta+s^2}\,ds
 &=\sqrt{\frac{2(p-1)}{p}}
   \left(\frac{n}{p}\right)^{\frac{p-2}{2}}
   \int_{-\varepsilon}^{\varepsilon}
       \frac{ds}{\delta+s^2}\\
 &\quad-\frac{2(2-p)}{3n}
   \int_{-\varepsilon}^{\varepsilon}
       \frac{s}{\delta+s^2}\,ds
   +\int_{-\varepsilon}^{\varepsilon}
       \frac{O(s^2)}{\delta+s^2}\,ds.
\end{align*}
The three integrals on the right satisfy
$$\begin{cases}
 \int_{-\varepsilon}^{\varepsilon}
       \frac{ds}{\delta+s^2}
 =\frac{2}{\sqrt{\delta}}
       \arctan\left(\frac{\varepsilon}{\sqrt{\delta}}\right)
   =\frac{\pi}{\sqrt{\delta}}+O(1)\\
 \int_{-\varepsilon}^{\varepsilon}
       \frac{s}{\delta+s^2}\,ds=0\\
 \int_{-\varepsilon}^{\varepsilon}
       \frac{s^2}{\delta+s^2}\,ds\le 2\varepsilon.
\end{cases}$$
Consequently,
\begin{align}\label{eq:p-model-expansion}
 L&=\sqrt{\frac{2(p-1)}{p}}
     \left(\frac{n}{p}\right)^{\frac{p-2}{2}}
     \frac{\pi}{\sqrt{\delta}}+O(1),\notag\\
 \mu_L&=\left(\frac{n}{p}\right)^p
    +\frac{2(p-1)}{p}\left(\frac{n}{p}\right)^{p-2}
       \frac{\pi^2}{L^2}+O(L^{-3}).
\end{align}
Indeed, the first line gives $\delta=\frac{2(p-1)}{p}\left(\frac{n}{p}\right)^{p-2}\pi^2\bigl(L+O(1)\bigr)^{-2}$. We expand the denominator to obtain the second line. The cancellation of the odd term in $dz/ds$
is what keeps the error in the first line bounded.

\emph{The lower bound.}
Let $\Omega$ be a smooth bounded domain with $\Ric\ge-ng$, outward mean curvature $H_{\partial\Omega}\ge n$, and inradius $L$. Sakurai's comparison theorem~\cite[Theorem 1.6]{Sakurai}, with the parameters specified above, gives
\begin{equation}\label{eq:p-domain-lower}
 \lambda_{1,p}(\Omega)\ge\mu_L.
\end{equation}
The same bound holds for $C^2$ boundaries by smooth approximation and continuity of the model eigenvalue in the mean-curvature parameter and interval length. We use this known comparison without repeating its proof.

Choose a boundary representative of nonnegative scalar curvature.
As recalled at the beginning of Section~2, the $C^3$ conformal compactness
gives a $C^2$ geodesic compactification for this boundary representative.
The proof of the mean-curvature estimate in \cite[Lemma 4.1]{Jinp}
requires only this $C^2$ regularity: the scalar curvature of the
geodesic compactification extends continuously to $M$, and its
Riccati inequality holds in the interior.
The mean-curvature estimate of \cite[Lemma 4.1]{Jinp} and the normal
collar construction in \cite[Section 4.1]{Jinp} provide truncations
$E_R$ satisfying
\[
 B(o,R)\subset E_R,\qquad H_{\partial E_R}\ge n,\qquad
 \operatorname{Inrad}(E_R)=R+C_o
\]
for a constant $C_o$ independent of $R$. The distance and inradius identities are given in \cite[(2.16) and Lemma 2.5]{JinRadius}. By domain monotonicity and
\eqref{eq:p-model-expansion}--\eqref{eq:p-domain-lower},
\begin{equation}\label{eq:p-ball-lower}
 \lambda_{1,p}(B(o,R))\ge
 \left(\frac{n}{p}\right)^p
 +\frac{2(p-1)}{p}\left(\frac{n}{p}\right)^{p-2}
    \frac{\pi^2}{R^2}-O(R^{-3}).
\end{equation}

\emph{The upper bound.}
The Cheng-type comparison under $\Ric\ge-ng$ applies to every
$p>1$ by \cite[Theorem 3.2 and Remark 3.3(2)]{Mao}; see also
\cite[Theorem 3]{Takeuchi}:
\[
 \lambda_{1,p}(B(o,R))
 \le\lambda_{1,p}\bigl(B^{\mathbb H^{n+1}}(R)\bigr).
\]
Following the radial Rayleigh-quotient calculation of
\cite[Lemma 4.2]{Jinp}, with an endpoint modification, set
\[
 v(r)=
 \begin{cases}
 r\sin^{\frac{2}{p}}\!\frac{\pi}{R},&0\le r\le1,\\[3pt]
 \sin^{\frac{2}{p}}\!\frac{\pi r}{R},&1\le r\le R-1,\\[3pt]
 (R-r)\sin^{\frac{2}{p}}\!\frac{\pi}{R},&R-1\le r\le R,
 \end{cases}
 \qquad u(r)=e^{-\frac{n}{p} r}v(r).
\]
The linear endpoint pieces give $u\in W_0^{1,p}
\bigl(B^{\mathbb H^{n+1}}(R)\bigr)$ for all fixed $p>1$.
Since $v(R-r)=v(r)$ and $e^{-nr}(\sinh r)^n$ is a constant multiple
of $(1-e^{-2r})^n$, direct substitution gives
\begin{equation}\label{eq:p-test-quotient}
\begin{aligned}
 \lambda_{1,p}(B(o,R))
 &\le\frac{\displaystyle\int_0^R
   (1-e^{-2r})^n
   \left|\frac{n}{p} v-v'\right|^p\,dr}
  {\displaystyle\int_0^R(1-e^{-2r})^n v^p\,dr}\\
 &\le\frac{\displaystyle\int_0^R
          \left|\frac{n}{p} v-v'\right|^p\,dr}
 {\displaystyle\int_0^R(1-e^{-2r})^n v^p\,dr}.
\end{aligned}
\end{equation}
We first calculate the denominator in \eqref{eq:p-test-quotient}.
The two linear endpoint pieces of $v$ give
\begin{align*}
 \int_0^R v^p\,dr
 &=\int_0^R\sin^2\!\left(\frac{\pi r}{R}\right)dr
  +2\int_0^1\left[
      r^p\sin^2\!\left(\frac{\pi}{R}\right)
      -\sin^2\!\left(\frac{\pi r}{R}\right)
     \right]dr\\
 &=\frac{R}{2}+O(R^{-2}).
\end{align*}
For every integer $1\le k\le n$, the profile satisfies
$v^p\le C(1+r)^2R^{-2}$ on $[0,\frac{R}{2}]$ and
$v^p\le1$ on $[\frac{R}{2},R]$. Therefore
\[
 \int_0^R e^{-2kr}v^p\,dr
 \le\frac{C}{R^2}\int_0^{\frac{R}{2}}
       e^{-2kr}(1+r)^2\,dr
     +\int_{\frac{R}{2}}^\infty e^{-2kr}\,dr
 =O(R^{-2}),\qquad 1\le k\le n.
\]
Expanding $(1-e^{-2r})^n$ by the binomial formula now gives
\begin{align}\label{eq:p-denominator}
 \int_0^R(1-e^{-2r})^nv^p\,dr
 &=\int_0^R v^p\,dr
   +\sum_{k=1}^n(-1)^k\binom nk
        \int_0^R e^{-2kr}v^p\,dr\notag\\
 &=\frac{R}{2}+O(R^{-2}).
\end{align}

We next calculate the numerator. The symmetry $v(R-r)=v(r)$ gives
\[
 \int_0^R\left|\frac{n}{p}v-v'\right|^p\,dr
 =\frac{1}{2}\int_0^R\left(
   \left|\frac{n}{p}v-v'\right|^p
  +\left|\frac{n}{p}v+v'\right|^p\right)dr.
\]
Choose a fixed $d\ge1$ large enough that
$|v'|\le\frac{n}{2p}v$ on $[d,R-d]$.
Indeed, on this interval the sine profile satisfies
\[
 \frac{|v'(r)|}{v(r)}
 =\frac{2\pi}{pR}\left|\cot\frac{\pi r}{R}\right|
 \le\frac{2}{p\min\{r,R-r\}}
 \le\frac{2}{pd}.
\]
Thus any fixed $d\ge\max\{1,\frac{4}{n}\}$ has the stated
property for all sufficiently large $R$.
On $[0,1]$, for example,
\[
 \int_0^1v^{p-2}(v')^2\,dr
 =\sin^2\!\left(\frac{\pi}{R}\right)
  \int_0^1r^{p-2}\,dr
 =\frac{\sin^2(\frac{\pi}{R})}{p-1}=O(R^{-2}).
\]
The other terms on $[0,d]\cup[R-d,R]$ satisfy the same bound,
since $d$ is fixed and $p>1$. In particular,
\[
 \int_{[0,d]\cup[R-d,R]}
 \left(v^p+v^{p-2}(v')^2
          +\left|\frac{n}{p}v-v'\right|^p\right)dr
 =O(R^{-2}).
\]
On $[d,R-d]$, the bound on $|v'|$ allows the even Taylor expansion
\begin{align*}
 &\frac{1}{2}\left(
    \left|\frac{n}{p}v-v'\right|^p
   +\left|\frac{n}{p}v+v'\right|^p\right)\\
 &\quad=\left(\frac{n}{p}\right)^pv^p
  +\frac{p(p-1)}{2}\left(\frac{n}{p}\right)^{p-2}
      v^{p-2}(v')^2
  +O\bigl(v^{p-4}(v')^4\bigr).
\end{align*}
Its remainder is integrable, and
\begin{align*}
 \int_d^{R-d}v^{p-4}(v')^4\,dr
 \le C\left(\frac{\pi}{R}\right)^4
   \int_d^{R-d}
    \frac{\cos^4(\frac{\pi r}{R})}
         {\sin^2(\frac{\pi r}{R})}\,dr
 \le\frac{C}{R^2}\int_d^{\frac{R}{2}}\frac{dr}{r^2}
 =O(R^{-2}).
\end{align*}
The quadratic term can be integrated exactly on all three pieces:
\begin{align*}
 \int_0^Rv^{p-2}(v')^2\,dr
 &=\frac{4\pi^2}{p^2R^2}
      \int_1^{R-1}\cos^2\!\left(\frac{\pi r}{R}\right)dr
     +\frac{2\sin^2(\frac{\pi}{R})}{p-1}\\
 &=\frac{2\pi^2}{p^2R}+O(R^{-2}).
\end{align*}
Combining these calculations with
$\int_0^Rv^p\,dr=\frac{R}{2}+O(R^{-2})$ gives
\begin{equation}\label{eq:p-unweighted-energy}
\begin{aligned}
 \int_0^R\left|\frac{n}{p}v-v'\right|^p\,dr
 &=\left(\frac{n}{p}\right)^p\int_0^Rv^p\,dr
  +\frac{p(p-1)}{2}\left(\frac{n}{p}\right)^{p-2}
     \int_0^Rv^{p-2}(v')^2\,dr+O(R^{-2})\\
 &=\left(\frac{n}{p}\right)^p\frac{R}{2}
  +\frac{p-1}{p}\left(\frac{n}{p}\right)^{p-2}
       \frac{\pi^2}{R}+O(R^{-2}).
\end{aligned}
\end{equation}
Finally, \eqref{eq:p-test-quotient}, \eqref{eq:p-denominator}, and
\eqref{eq:p-unweighted-energy} give
\begin{align}\label{eq:p-ball-upper}
 \lambda_{1,p}(B(o,R))
 &\le\frac{\displaystyle
     \left(\frac{n}{p}\right)^p\frac{R}{2}
   +\frac{p-1}{p}\left(\frac{n}{p}\right)^{p-2}
      \frac{\pi^2}{R}+O(R^{-2})}
     {\displaystyle\frac{R}{2}+O(R^{-2})}\notag\\
 &=\left(\frac{n}{p}\right)^p
 +\frac{2(p-1)}{p}\left(\frac{n}{p}\right)^{p-2}
   \frac{\pi^2}{R^2}+O(R^{-3}).
\end{align}
Combining \eqref{eq:p-ball-lower} and \eqref{eq:p-ball-upper}
proves \eqref{eq:p-ball-expansion}.
\end{proof}

The following proposition strengthens \cite[Proposition 1.5]{Jinp} by using the sharp second coefficient and a larger range of $p$. We use the splitting theorem of Sung--Wang~\cite[Theorem 1.4]{SW} and refine the estimates for the two exceptional models considered in \cite[Section 5]{Jinp}.
\begin{proposition}\label{prop:p-one-end}
Let $(X^{n+1},g)$ be a connected complete noncompact manifold, where $n\ge2$, with $\Ric_g\ge-ng$. Suppose that, for some $o\in X$ and
$ 1<p\le\frac{n^2}{2(n-1)},$ there is a constant $C>0$ such that, for all sufficiently large $R$,
\begin{equation}\label{eq:p-one-end}
 \lambda_{1,p}(B(o,R))\ge
 \left(\frac np\right)^p
 +\frac{2(p-1)}p\left(\frac np\right)^{p-2}
   \frac{\pi^2}{R^2}-\frac{C}{R^3}.
\end{equation}
Then $X$ has exactly one end.
\end{proposition}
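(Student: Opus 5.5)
We argue by contradiction. Suppose $X$ has at least two ends. The first step is to use \eqref{eq:p-one-end} to get the bottom of the $p$-spectrum. Letting $R\to\infty$ and using exhaustion gives $\lambda_{1,p}(X)\ge(\frac np)^p$. The Cheng-type comparison of Mao--Takeuchi gives the reverse inequality, so $\lambda_{1,p}(X)=(\frac np)^p$ is maximal.

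The second step is the splitting. The Sung--Wang splitting theorem~\cite[Theorem 1.4]{SW} applies to a manifold with $\Ric\ge-ng$, maximal $\lambda_{1,p}$, and at least two ends. The restriction $1<p\le\frac{n^2}{2(n-1)}$ is exactly the range where this theorem is available. It says that $X$ is one of two exceptional models, as in \cite[Section 5]{Jinp}:
\begin{itemize}
\item[(a)] $X=\mathbb R\times N$ with $g=dt^2+e^{2t}g_N$, where $N$ is compact with $\Ric_N\ge0$;
\item[(b)] (only for $n=2$, and in the relevant $p$-range) $X=\mathbb R\times N$ with $g=dt^2+\cosh^2t\,g_N$, where $N$ is compact with sectional curvature bounded below by $-1$.
\end{itemize}
So it suffices to show that in each model $\lambda_{1,p}(B(o,R))$ falls below the right side of \eqref{eq:p-one-end} by a multiple of $R^{-2}$.

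The third step is a test-function bound in each model. Balls $B(o,R)$ contain slabs $\{|t-t_o|<R-C\}\times N$, so by domain monotonicity it suffices to bound $\lambda_{1,p}$ of the slab from above. Use functions of $t$ alone, $u=e^{-\frac npt}v(t)$ in model (a) and the analogous normalization in model (b). The Rayleigh quotient then becomes a one-dimensional quotient on an interval of length about $2R$, with density $e^{nt}$ or $\cosh^nt$. Take $v$ to be the sine-power profile of the proof of Theorem~\ref{thm:p-balls}, but now on an interval of length $2R$ rather than $R$. In model (a) the weight cancels exactly. The same even Taylor expansion that gave \eqref{eq:p-unweighted-energy} then yields
\[
 \lambda_{1,p}(B(o,R))\le\left(\frac np\right)^p+\frac{2(p-1)}p\left(\frac np\right)^{p-2}\frac{\pi^2}{(2R)^2}+O(R^{-3}).
\]
This is one quarter of the second coefficient in \eqref{eq:p-one-end}, which contradicts \eqref{eq:p-one-end} for large $R$.

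Model (b) is the main obstacle, because the weight $\cosh^nt$ is not a pure exponential. Here I would compare with $\lambda_{1,p}$ of the hyperbolic-type warped interval. First, choose the exponent in $u=\cosh^{-n/p}t\,v$ to absorb the weight. Second, estimate the extra potential terms, which are $O(e^{-2|t|})$ near $t=0$. As in the binomial expansion of \eqref{eq:p-denominator}, these contribute only $O(R^{-2})$ to numerator and denominator, that is, $O(R^{-3})$ to the quotient. The length is still doubled, so the second coefficient is again reduced by the factor $\frac14$, giving the same contradiction. The delicate point is that the even expansion needs $|v'|\ll v$, which fails near the middle region $t\approx0$. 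This requires checking that the error near $t=0$ is $O(R^{-2})$ after accounting for the non-exponential weight. If that fails, I would instead split the slab at $t=0$ and use a Neumann-type profile.
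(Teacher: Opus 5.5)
Your Steps 1--2 and your treatment of the exponential model agree with the paper. The gap is in the $\cosh$ model near $t=0$, where you suspected trouble, but the check you propose would fail. The error there is of order one in the numerator, and what decides the argument is its sign.

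With $n=2$ and $u=\cosh^{-2/p}t\,v$ on $(-L,L)\times N$, the weight cancels and the quotient is $\int_{-L}^{L}\bigl|\frac2p\tanh t\,v-v'\bigr|^p\,dt\big/\int_{-L}^{L}v^p\,dt$. This is exactly the paper's test function $w^{2/p}$ with $w=(\cosh t)^{-1}\cos\frac{\pi t}{2L}$. The deviation of $\tanh t$ from $\pm1$ is concentrated in the middle of the slab, where $v\approx1$. It is not at an endpoint where $v^p=O(R^{-2})$, as in \eqref{eq:p-denominator}, so that analogy fails. Moreover, $\tanh t$ is odd and $v$ is even, so the integrand is invariant under $t\mapsto-t$. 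The symmetrization that killed the linear term in \eqref{eq:p-unweighted-energy} is therefore unavailable. (The condition $|v'|\ll v$ does hold near $t=0$; what fails there is $\frac2p\tanh t\approx\pm\frac2p$.) Expand to first order in $v'$ and integrate the linear term by parts. The quotient becomes $(\frac2p)^p+I(p)/L+O(L^{-2})$, where
\[
 I(p)=\int_{\mathbb R}\Bigl[(p-1)\bigl(\tfrac2p\bigr)^{p-1}\frac{|\tanh t|^{p-2}}{\cosh^2t}-\bigl(\tfrac2p\bigr)^p\bigl(1-|\tanh t|^p\bigr)\Bigr]dt
 =2\bigl(\tfrac2p\bigr)^{p-1}\Bigl[1-\sum_{k\ge0}\frac{2}{(2k+1)(2k+1+p)}\Bigr].
\]
So the correction to the quotient is of order $R^{-1}$, not $O(R^{-3})$. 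It dominates the $R^{-2}$ terms you want to compare.

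Everything therefore hinges on the sign of $I(p)$, which your argument never determines. We have $I(2)=0$, since the bracket vanishes identically, and $I(p)<0$ for $1<p<2$. For $p>2$, $I(p)>0$, and the same reasoning would assert a false expansion of the quotient. Your final inequality holds for $1<p\le2$ only because of this sign.

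The paper supplies exactly this step. For $p=2$, the ground-state identity $-\Delta_g(\cosh t)^{-1}=(\cosh t)^{-1}$ gives the quotient $1+\frac{\pi^2}{4L^2}$. For $1<p<2$, it applies $s^p\le\frac p2s^2+1-\frac p2$ with $s=|dw|_g/w$. Near $t=0$ one has $s\approx|\tanh t|<1$, and the strict deficit there yields $\lambda_{1,p}(X)\le(\frac2p)^p\bigl(1+\frac{p\pi^2}{8L^2}-\frac cL\bigr)<(\frac2p)^p$. This contradicts Step 1 directly, with no comparison of second coefficients.

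Your fallback of splitting at $t=0$ with a Neumann condition does not help. Even test functions on the full slab are precisely Neumann test functions on the half-slab, so the same order-one contribution at the totally geodesic slice reappears.
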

\begin{proof}
Exhaustion and the upper bound under $\Ric_g\ge-ng$ proved above imply
$\lambda_{1,p}(X)=\left(\frac np\right)^p$. By the spectral rigidity theorem of Sung--Wang~\cite[Theorem 1.4]{SW}, if $X$ has more than one end, then it is isometric to one of
\[
 (\mathbb R\times N,dt^2+e^{2t}g_N)\quad(n\ge2),
 \qquad
 (\mathbb R\times N,dt^2+\cosh^2t\,g_N)\quad(n=2),
\]
where $N$ is compact. Following \cite[Examples 5.2--5.3]{Jinp}, we estimate the eigenvalues of
$\Omega_L=(-L,L)\times N$. Write $o=(t_o,y_o)$. For either model, a path through the slice $\{t=0\}$ gives
\[
 d_g(o,(t,y))\le |t_o|+\operatorname{diam}(N,g_N)+|t|.
\]
Thus $\Omega_L\subset B(o,L+C_o)$ for the fixed constant
$C_o=|t_o|+\operatorname{diam}(N,g_N)$.

For the exponential model, use $u=e^{-\frac np t}v(t)$, where $v$ is the endpoint-modified amplitude $\cos^{\frac2p}(\frac{\pi t}{2L})$ from the preceding proof. Translation of \eqref{eq:p-unweighted-energy}, with interval length $2L$, gives
\[
 \lambda_{1,p}(\Omega_L)\le
 \frac{\displaystyle\int_{-L}^L\left|\frac np v-v'\right|^p\,dt}
 {\displaystyle\int_{-L}^Lv^p\,dt}
 =\left(\frac np\right)^p
 +\frac{p-1}{2p}\left(\frac np\right)^{p-2}
   \frac{\pi^2}{L^2}+O(L^{-3}).
\]
Taking $L=R-C_o$ contradicts \eqref{eq:p-one-end}, since the second coefficient is one quarter of the assumed coefficient.

For the $\cosh$ model, the allowed range is $1<p\le2$. Put
\[
 w(t)=(\cosh t)^{-1}\cos\frac{\pi t}{2L},\qquad |t|<L.
\]
Since $-\Delta_g(\cosh t)^{-1}=(\cosh t)^{-1}$, the ground-state identity gives
\[
 \int_{\Omega_L}w^2\,dv_g=L\operatorname{Vol}(N,g_N),
 \qquad
 \frac{\displaystyle\int_{\Omega_L}|dw|_g^2\,dv_g}
 {\displaystyle\int_{\Omega_L}w^2\,dv_g}
 =1+\frac{\pi^2}{4L^2}.
\]
For $p=2$, this again contradicts \eqref{eq:p-one-end} after taking $L=R-C_o$.
For $1<p<2$, use $w^{\frac2p}$ as a test function and the inequality
\[
 s^p\le\frac p2s^2+1-\frac p2,\qquad s\ge0,
\]
which is strict unless $s=1$. On a fixed neighborhood of $t=0$, the ratio $|w'|/w$ stays uniformly below $1$ for large $L$. The resulting strict deficit has an integral bounded below by a positive constant independent of $L$. Hence, for some $c>0$,
\[
 \lambda_{1,p}(X)\le
 \frac{\displaystyle\int_{\Omega_L}|d(w^{\frac2p})|_g^p\,dv_g}
 {\displaystyle\int_{\Omega_L}w^2\,dv_g}
 \le\left(\frac2p\right)^p
 \left(1+\frac{p\pi^2}{8L^2}-\frac cL\right)
 <\left(\frac2p\right)^p
\]
for large $L$, contradicting the spectral bottom. Both exceptional models are excluded.
\end{proof}

\section{Proof of the boundary-layer characterization}\label{sec:proof}
Fix the geodesic defining function $x$ in Theorem~\ref{thm:main} and set $r=-\ln x$. Its normal collar coordinates give $g^+=x^{-2}(dx^2+h_x)$, with $h_x$ jointly $C^3$ up to the boundary. Throughout this section the layers are exactly $\{R<r(\cdot)<R+\ell\}$.

We first establish the conformal sign relation and the collar estimates needed in the proof.

\subsection{The conformal sign and differentiated collar jets}
Write $\mu_1=\mu_1(L_h)$ and
\[
 \E_h[u]=\int_M\left(|du|_h^2+\frac{n-2}{4(n-1)}R_hu^2\right)\dv_h.
\]
In the notation above, the definition \eqref{eq:Yamabe-definition} becomes
\begin{equation}\label{eq:Yamabe}
 Y(M,[h])=\frac{4(n-1)}{n-2}
 \inf\limits_{0\ne u\in H^1(M)}
 \frac{\E_h[u]}{\left(\int_M|u|^{\frac{2n}{n-2}}\dv_h\right)^{\frac{n-2}{n}}}.
\end{equation}
We use the standard conformal covariance described in Lee--Parker~\cite{LP}.

\begin{lemma}\label{lem:sign}
For every smooth closed $n$-manifold, $n\geq3$,
$\sgn\mu_1(L_h)=\sgn Y(M,[h])$.
\end{lemma}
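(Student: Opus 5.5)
The plan is to compare two variational characterizations on the same boundary metric $h$: the linear one for $\mu_1(L_h)$ and the conformally invariant one \eqref{eq:Yamabe} for $Y(M,[h])$. By definition,
\[
 \mu_1(L_h)=\inf_{0\ne u\in H^1(M)}\frac{\E_h[u]}{\int_Mu^2\dv_h}.
\]
Both quantities are infima of quotients with the same numerator $\E_h[u]$. Their denominators, $\int_M u^2\dv_h$ and $\bigl(\int_M|u|^{2n/(n-2)}\dv_h\bigr)^{(n-2)/n}$, are positive for $u\ne0$. The sign of either infimum is therefore governed by the sign of $\E_h$ on nonzero functions. We will show that each of $\mu_1(L_h)$ and $Y(M,[h])$ is negative, zero, or positive exactly when $\E_h$ is, respectively, negative somewhere, nonnegative with a nontrivial zero, or positive definite.

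\textbf{Negative case.} If $\E_h[u]<0$ for some $u\ne0$, both quotients at $u$ are negative, so $\mu_1<0$ and $Y<0$. Conversely, if either infimum is negative, some $u$ has $\E_h[u]<0$.

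\textbf{Nonnegative case.} If $\E_h\ge0$ on $H^1(M)$, both infima are nonnegative.
\begin{itemize}
\item If $\mu_1=0$, the first eigenfunction $\varphi$ gives $\E_h[\varphi]=0$. Its quotient in \eqref{eq:Yamabe} is then zero, so $Y=0$.
\item If $\mu_1>0$, then $\E_h[u]\ge\mu_1\|u\|_{L^2}^2$. Since $\E_h$ dominates a multiple of the $H^1$ norm, we get $\E_h[u]\ge c\|u\|_{H^1}^2$. This uses $|du|^2=\E_h-(\text{curvature term})\le\E_h+C\|u\|_{L^2}^2\le(1+C/\mu_1)\E_h$. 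The Sobolev embedding $H^1\hookrightarrow L^{2n/(n-2)}$ then gives $\E_h[u]\ge c'\|u\|_{L^{2n/(n-2)}}^2$, so $Y\ge \frac{4(n-1)}{n-2}c'>0$.
\end{itemize}
Together with the negative case, these two points show that the signs agree in all three cases.

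\textbf{Main point.} The only step needing care is $\mu_1>0\Rightarrow Y>0$ (strict). It is handled by the coercivity bound plus Sobolev above, and it uses only compactness of $M$. The reverse direction $Y>0\Rightarrow\mu_1>0$ follows from the trichotomy, since $\mu_1\le0$ would force $Y\le0$. Existence of a minimizer for $\mu_1$ is standard spectral theory of the self-adjoint elliptic operator $L_h$ on a closed manifold. Conformal covariance (Lee--Parker) is not actually needed, though it confirms that the sign of $\mu_1$ is independent of the representative $h$.
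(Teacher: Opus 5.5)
Your proposal is correct and follows the paper's proof. A single function with negative energy settles the negative case. A first eigenfunction with zero energy settles the case $\mu_1=0$. For $\mu_1>0$, coercivity of the energy in $H^1(M)$ combined with the Sobolev embedding gives $Y>0$, and the converse implications follow because the three cases are exhaustive.
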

\begin{proof}
The first eigenvalue of the conformal Laplacian has the variational
characterization
\[
 \mu_1(L_h)=\inf_{0\ne u\in H^1(M)}
 \frac{\E_h[u]}{\displaystyle\int_Mu^2\,dv_h}.
\]
A negative energy gives a negative quotient in both variational problems, and a negative infimum yields a test function with negative energy. If $\mu_1=0$, the energy is nonnegative and a lowest eigenfunction has zero energy, so $Y=0$. If $\mu_1>0$, then $\|u\|_2^2\leq\mu_1^{-1}\E_h[u]$. Since the potential in $L_h$ is bounded, this implies $\|u\|_{H^1}^2\leq C\E_h[u]$. The compact-manifold Sobolev inequality gives $\|u\|_{\frac{2n}{n-2}}^2\leq C\E_h[u]$, and hence $Y>0$. These cases also prove the converse sign implications. Connectedness is unnecessary.
\end{proof}

\begin{lemma}\label{lem:jets}
Let $P_h$ be the Schouten tensor and let $v$ be the relative volume density:
\[
 P_h=\frac{1}{n-2}\left(\Ric_h-\frac{R_h}{2(n-1)}h\right),
 \qquad v(x,y)=\left(\frac{\det h_x(y)}{\det h(y)}\right)^{\frac{1}{2}}.
\]
Then
\begin{equation}\label{eq:jets}
 \left\{
 \begin{aligned}
 &h_x=h-x^2P_h+O(x^3),\\
 &\ln v=-\frac{R_h}{4(n-1)}x^2+O(x^3),\\
 &\partial_r\ln v=\frac{R_h}{2(n-1)}x^2+O(x^3),\\
 &\partial_r^2\ln v=-\frac{R_h}{n-1}x^2+O(x^3),
 \qquad |d_y\ln v|_h=O(x^2).
 \end{aligned}
 \right.
\end{equation}
The errors are uniform. The remainder in the first line of \eqref{eq:jets} remains $O(x^3)$ after up to two applications of $x\partial_x$, and $d_y(h_x-h)=O(x^2)$ in a fixed finite atlas.
\end{lemma}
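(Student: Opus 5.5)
The plan is to use the Einstein equation in geodesic normal form, $g^+=x^{-2}(dx^2+h_x)$, in its standard Fefferman--Graham expansion form. Write $h_x=h+xh^{(1)}+x^2h^{(2)}+O(x^3)$. With $\Ric(g^+)=-ng^+$ the equations read
\[
 x h_x''-(n-1)h_x'-\tr(h_x^{-1}h_x')\,h_x-2x\Ric(h_x)+\dots=0 ,
\]
plus the trace and mixed equations. The $C^3$ regularity of $x^2g^+$ makes $h_x$ jointly $C^3$, which is enough to differentiate these identities up to order two in $x$ at $x=0$.

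\emph{Step 1.} I would first show that the order-one term vanishes. This is $h^{(1)}=0$, which the paper already uses, since $h_x=h+O(x^2)$ was quoted from Graham--Lee.

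\emph{Step 2.} Next I would identify the order-two term by evaluating the $x^1$ coefficient of the tangential Einstein equation. This gives the classical relation
\[
 (n-2)h^{(2)}=-\Bigl(\Ric_h-\tfrac{R_h}{2(n-1)}h\Bigr),
\qquad\text{that is,}\qquad h^{(2)}=-P_h .
\]
This is valid for $n\ge3$ and is the first line of \eqref{eq:jets}. The trace $\tr_h P_h=\frac{R_h}{2(n-1)}$ will be used repeatedly below.

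\emph{Step 3.} I would then derive the volume density from $\ln\det(h+A)=\ln\det h+\tr_h A+O(|A|^2)$ with $A=-x^2P_h+O(x^3)$. This gives
\[
 \ln v=\tfrac12\tr_h A+O(x^3)=-\tfrac{R_h}{4(n-1)}x^2+O(x^3).
\]
Since $r=-\ln x$, we have $\partial_r=-x\partial_x$. The identities $-x\partial_x(x^2)=-2x^2$ and $(x\partial_x)^2x^2=4x^2$ then give the $\partial_r$ and $\partial_r^2$ lines, with coefficients $\frac{R_h}{2(n-1)}$ and $-\frac{R_h}{n-1}$.

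\emph{Main obstacle.} The key point is that the remainders survive differentiation, i.e. that $O(x^3)$ stays $O(x^3)$ after up to two applications of $x\partial_x$. For this I would use the integral form of Taylor's theorem. Write $h_x-h+x^2P_h=x^3\int_0^1\frac{(1-\tau)^2}{2}\partial_x^3h_{\tau x}\,d\tau$, which is legitimate because $\partial_x^3h_x$ is continuous. Then $x\partial_x$ of this term equals $x^3\bigl(\text{bounded}\bigr)$ plus $x\partial_x$ applied to the remainder. Differentiating under the integral would require $\partial_x^4$, which $C^3$ does not provide. To avoid this, I would write the remainder as $x^3\rho(x)$ with $\rho(x)=x^{-3}\int_0^x\frac{(x-s)^2}{2}\partial_s^3h_s\,ds$ and differentiate it in that form. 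This yields
\[
 x\partial_x(x^3\rho)=x^3\int_0^1(1-\tau)\tau\,\partial^3h_{\tau x}\,d\tau\cdot(\text{const}),
\]
which involves only $\partial_x^3h$. A second application is handled the same way, through $x^2\partial_x^2$ of the remainder, which equals $x^2\int_0^x(x-s)\partial^3h_s\,ds=O(x^3)$. The remainders in $\ln v$, $\partial_r\ln v$ and $\partial_r^2\ln v$ follow by composing with the smooth function $\ln\det$.

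\emph{Tangential derivatives.} For $d_y$, note that $d_y(h_x-h)=x^2\int_0^1(1-\tau)\,d_y\partial_x^2h_{\tau x}\,d\tau\cdot 2$. This needs only mixed derivatives of order three, which $C^3$ supplies, and it gives $O(x^2)$ in a fixed finite atlas. Consequently $|d_y\ln v|_h=O(x^2)$. Uniformity of all errors follows from compactness of $M$.
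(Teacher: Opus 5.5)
Your proposal is correct and follows essentially the same route as the paper: the Einstein equation in geodesic normal form gives $\partial_xh_x|_{x=0}=0$ and $\partial_x^2h_x|_{x=0}=-2P_h$. The log-determinant expansion with $\tr_hP_h=\frac{R_h}{2(n-1)}$ then gives $\ln v$, and $\partial_r=-x\partial_x$ together with the $C^3$ Taylor bounds $R_3=O(x^3)$, $\partial_xR_3=O(x^2)$, $\partial_x^2R_3=O(x)$ for $R_3=h_x-h+x^2P_h$ gives the remaining lines. Your displayed integral formulas contain harmless slips (correctly, $x\partial_xR_3=x^3\int_0^1(1-\tau)\,\partial_x^3h_{\tau x}\,d\tau$ and $\partial_x^2R_3=\int_0^x\partial_s^3h_s\,ds$), but the resulting $O(x^3)$ bounds are exactly those the paper uses.
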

\begin{proof}
The first line of \eqref{eq:jets} is the second-order
Fefferman--Graham expansion of an Einstein metric in geodesic normal
form; see Graham~\cite{Graham}. At the boundary, the Einstein equation gives
\[
 \left.\partial_xh_x\right|_{x=0}=0,\qquad
 \left.\partial_x^2h_x\right|_{x=0}=-2P_h.
\]
Thus $h_x=h-x^2P_h+R_3(x)$, where the $C^3$ hypothesis and Taylor's theorem
give
\[
 R_3=O(x^3),\qquad \partial_xR_3=O(x^2),\qquad
 \partial_x^2R_3=O(x),\qquad d_y(h_x-h)=O(x^2).
\]
Applying the smooth matrix map
$A\mapsto\frac{1}{2}\ln\det A$ and using
$\tr_hP_h=\frac{R_h}{2(n-1)}$ yields
\[
 \ln v=\frac{1}{2}\ln\det(h^{-1}h_x)
 =-\frac{R_h}{4(n-1)}x^2+O(x^3).
\]
Finally, $\partial_r=-x\partial_x$. Applying this identity once and
twice to the preceding expansion, with the displayed derivative bounds
for $R_3$, proves the remaining lines of \eqref{eq:jets}. The tangential
estimate follows by differentiating the determinant formula in a fixed
finite atlas. We will also use
\[
 D_y^2\ln v=O(x).
\]
Indeed, $\ln v(0,y)=0$, and the mixed derivatives of total order three
are bounded by the $C^3$ hypothesis, so
\[
 D_y^2\ln v(x,y)
 =\int_0^x\partial_\xi D_y^2\ln v(\xi,y)\,d\xi=O(x).
\]
Here $D_y^2$ denotes coordinate derivatives in the fixed finite atlas.
\end{proof}

\subsection{Proof of the layer expansion}
Set
\[
 s(\cdot)=r(\cdot)-R,\qquad \varepsilon=e^{-R}.
\]
Here $r=-\ln x$ is the geodesic coordinate in
Theorem~\ref{thm:main}, and $s$ is its translated layer coordinate.
Recall that, with $x=e^{-r}$,
\[
 g^+=dr^2+e^{2r}h_x,
 \qquad
 v(x,y)=\left(\frac{\det h_x(y)}{\det h(y)}\right)^{\frac12},
\]
and set
\begin{equation}\label{eq:Jdensity}
 J(r,y)=e^{nr}v(e^{-r},y),
 \qquad dv_{g^+}=J(r,y)\,dr\,dv_h.
\end{equation}
Thus $v$ is the relative volume density introduced in
Lemma~\ref{lem:jets}, and $dv_{h_x}=v(x,y)\,dv_h$.

\begin{proof}[Proof of Theorem~\ref{thm:main}]
We first assume that $M$ is connected. Let $\psi_1>0$ be an
$L^2(M,dv_h)$-normalized first eigenfunction of $L_h$:
\begin{equation}\label{eq:boundary-ground-state}
 L_h\psi_1=\mu_1\psi_1.
\end{equation}
Let $\varphi_\varepsilon$ be the positive normalized first Dirichlet eigenfunction
of
\[
 -\partial_s^2+\frac{n^2}{4}+\varepsilon^2\mu_1e^{-2s}
 \quad\text{on }(0,\ell),
\]
and denote its eigenvalue by $\nu_\varepsilon$. Thus
\begin{equation}\label{eq:radial-ground-state}
 \left\{
 \begin{aligned}
 &-\varphi_\varepsilon''+\left(\frac{n^2}{4}
 +\varepsilon^2\mu_1e^{-2s}\right)\varphi_\varepsilon
 =\nu_\varepsilon\varphi_\varepsilon,\\
 &\varphi_\varepsilon(0)=\varphi_\varepsilon(\ell)=0,
 \qquad \varphi_\varepsilon>0\quad\text{on }(0,\ell),\\
 &\int_0^\ell\varphi_\varepsilon^2\dd s=1.
 \end{aligned}
 \right.
\end{equation}

\begin{itemize}
\item[\textup{Step 1.}]
We first establish the expansion of $\nu_\varepsilon$. Put
\[
 H_0=-\partial_s^2+\frac{n^2}{4},\qquad
 \lambda_0=\frac{n^2}{4}+\frac{\pi^2}{\ell^2},
\]
and let
\[
 \phi(s)=\sqrt{\frac{2}{\ell}}\sin\frac{\pi s}{\ell}
\]
be the normalized first eigenfunction of $H_0$. The next eigenvalue is
$\frac{n^2}{4}+\frac{4\pi^2}{\ell^2}$. The variational
characterizations are
\begin{align*}
 \lambda_0
 &=\inf_{\substack{u\in H_0^1(0,\ell)\\ \|u\|_{L^2}=1}}
 \int_0^\ell\left(|u'|^2+\frac{n^2}{4}u^2\right)\dd s,\\
 \nu_\varepsilon
 &=\inf_{\substack{u\in H_0^1(0,\ell)\\ \|u\|_{L^2}=1}}
 \left\{
 \int_0^\ell\left(|u'|^2+\frac{n^2}{4}u^2\right)\dd s
 +\varepsilon^2\mu_1\int_0^\ell e^{-2s}u^2\dd s
 \right\}.
\end{align*}
For every such $u$,
\[
 -\varepsilon^2|\mu_1|
 \leq\varepsilon^2\mu_1\int_0^\ell e^{-2s}u^2\dd s
 \leq\varepsilon^2|\mu_1|.
\]
Subtracting the two variational infima gives
\begin{equation}\label{eq:epsilon-displacement}
 \lambda_0-\varepsilon^2|\mu_1|
 \leq\nu_\varepsilon
 \leq\lambda_0+\varepsilon^2|\mu_1|,
 \qquad
 |\nu_\varepsilon-\lambda_0|\leq\varepsilon^2|\mu_1|.
\end{equation}
In the remainder of Step~1,
\[
 \langle f,g\rangle
 :=\langle f,g\rangle_{L^2(0,\ell)}
 =\int_0^\ell f(s)g(s)\dd s.
\]
Since $\varphi_\varepsilon$ and $\phi$ are positive on $(0,\ell)$,
$c_\varepsilon=\langle\varphi_\varepsilon,\phi\rangle>0$. Write
\[
 \varphi_\varepsilon=c_\varepsilon\phi+z_\varepsilon,
 \qquad z_\varepsilon\perp\phi.
\]
If $Q$ denotes orthogonal projection onto $\phi^\perp$, projection of
\eqref{eq:radial-ground-state} gives
\[
 (H_0-\nu_\varepsilon)z_\varepsilon
 =-\varepsilon^2\mu_1Q(e^{-2s}\varphi_\varepsilon).
\]
The spectrum of $H_0$ on $\phi^\perp$ begins at
$\lambda_0+\frac{3\pi^2}{\ell^2}$. If
$\varepsilon^2|\mu_1|<\frac{3\pi^2}{2\ell^2}$, then
\begin{align*}
 \operatorname{dist}\left(\nu_\varepsilon,
 \sigma(H_0|_{\phi^\perp})\right)
 &\geq\lambda_0+\frac{3\pi^2}{\ell^2}-\nu_\varepsilon\\
 &\geq\frac{3\pi^2}{\ell^2}-\varepsilon^2|\mu_1|
 >\frac{3\pi^2}{2\ell^2}.
\end{align*}
Consequently,
\[
 \left\|(H_0-\nu_\varepsilon)^{-1}
 \right\|_{\phi^\perp\to\phi^\perp}
 =\frac{1}{\operatorname{dist}\left(
 \nu_\varepsilon,\sigma(H_0|_{\phi^\perp})\right)}
 \leq\frac{2\ell^2}{3\pi^2}.
\]
Since $\|Q\|=1$, $0<e^{-2s}\leq1$, and
$\|\varphi_\varepsilon\|_{L^2}=1$, it follows that
\begin{align*}
 \|z_\varepsilon\|_{L^2}
 &\leq\varepsilon^2|\mu_1|
 \left\|(H_0-\nu_\varepsilon)^{-1}\right\|
 \left\|Q(e^{-2s}\varphi_\varepsilon)\right\|_{L^2}\\
 &\leq\frac{2\ell^2}{3\pi^2}\varepsilon^2|\mu_1|.
\end{align*}
The normalization and orthogonality give
\[
 1=\|\varphi_\varepsilon\|_{L^2}^2
 =c_\varepsilon^2+\|z_\varepsilon\|_{L^2}^2,
 \qquad
 c_\varepsilon=(1-\|z_\varepsilon\|_{L^2}^2)^{\frac12}
 =1+O(\varepsilon^4).
\]
Taking the inner product of the eigenvalue equation with $\phi$ yields
\begin{align*}
 \nu_\varepsilon c_\varepsilon
 &=\langle H_0\varphi_\varepsilon,\phi\rangle
 +\varepsilon^2\mu_1
 \langle e^{-2s}\varphi_\varepsilon,\phi\rangle\\
 &=\langle\varphi_\varepsilon,H_0\phi\rangle
 +\varepsilon^2\mu_1
 \langle e^{-2s}(c_\varepsilon\phi+z_\varepsilon),\phi\rangle\\
 &=\lambda_0c_\varepsilon
 +\varepsilon^2\mu_1c_\varepsilon
 \langle e^{-2s}\phi,\phi\rangle
 +\varepsilon^2\mu_1
 \langle e^{-2s}z_\varepsilon,\phi\rangle.
\end{align*}
Moreover,
\[
 \left|\langle e^{-2s}z_\varepsilon,\phi\rangle\right|
 \leq\|z_\varepsilon\|_{L^2}=O(\varepsilon^2),
 \qquad c_\varepsilon^{-1}=1+O(\varepsilon^4).
\]
Consequently,
\begin{align}
 \nu_\varepsilon
 &=\lambda_0+\varepsilon^2\mu_1
 \frac{2}{\ell}\int_0^\ell e^{-2s}
 \sin^2\frac{\pi s}{\ell}\dd s+O(\varepsilon^4)\notag\\
 &=\frac{n^2}{4}+\frac{\pi^2}{\ell^2}
 +\varepsilon^2\radialaverage\mu_1+O(\varepsilon^4).
 \label{eq:nu-epsilon}
\end{align}
All constants are uniform when $\ell$ ranges over a compact subset of
$(0,\infty)$.

\item[\textup{Step 2.}]
Define the separated function
\begin{equation}\label{eq:Barta-test}
 F_R(r,y)=\varphi_\varepsilon(r-R)\psi_1(y).
\end{equation}
Recall that $g^+=dr^2+e^{2r}h_x$, $v=(\det h_x/\det h)^{\frac12}$,
and $J=e^{nr}v$, where $x=e^{-r}$. In the coordinates $(r,y)$, its
Laplacian has the divergence form
\begin{equation}\label{eq:collar-laplacian}
 -\Delta_{g^+}F_R
 =-J^{-1}\partial_r(J\partial_rF_R)
 -x^2v^{-1}\operatorname{div}_h
 \left(vh_x^{-1}d_yF_R\right),
 \qquad x=e^{-r}.
\end{equation}
Set
\[
 W(r,y):=\frac12\Delta_{g^+}\ln J
 -\frac14|d\ln J|_{g^+}^2.
\]
The product rule and \eqref{eq:collar-laplacian} give
\begin{align}
 J^{\frac12}(-\Delta_{g^+})(J^{-\frac12}F_R)
 &=-\Delta_{g^+}F_R+\langle d\ln J,dF_R\rangle_{g^+}
 +W(r,y)F_R\notag\\
 &=-\partial_r^2F_R-(\partial_r\ln J)\partial_rF_R
 -x^2v^{-1}\operatorname{div}_h(vh_x^{-1}d_yF_R)\notag\\
 &\quad+(\partial_r\ln J)\partial_rF_R
 +x^2\langle d_y\ln v,d_yF_R\rangle_{h_x}
 +W(r,y)F_R\notag\\
 &=-\partial_r^2F_R
 -x^2\operatorname{div}_h(h_x^{-1}d_yF_R)
 +W(r,y)F_R.
 \label{eq:conjugated-operator}
\end{align}
Here the last equality uses, with $Y=h_x^{-1}d_yF_R$,
\[
 v^{-1}\operatorname{div}_h(vY)
 =\operatorname{div}_hY+Y(\ln v).
\]

We first calculate the differential part in
\eqref{eq:conjugated-operator}. The separation of variables gives
\begin{align}
 &-\partial_r^2F_R
 -x^2\operatorname{div}_h(h_x^{-1}d_yF_R)\notag\\
 &=-\varphi_\varepsilon''\psi_1
 -x^2\varphi_\varepsilon
 \operatorname{div}_h(h_x^{-1}d_y\psi_1)\notag\\
 &=-\varphi_\varepsilon''\psi_1
 -x^2\varphi_\varepsilon\Delta_h\psi_1
 +O(x^4)\varphi_\varepsilon\psi_1.
 \label{eq:tangential-pointwise}
\end{align}
Indeed, the collar estimates in \eqref{eq:jets} imply
\begin{align*}
 &\operatorname{div}_h(h_x^{-1}d_y\psi_1)-\Delta_h\psi_1=\operatorname{div}_h
 \bigl((h_x^{-1}-h^{-1})d_y\psi_1\bigr),\\
 &\left|
 \operatorname{div}_h
 \bigl((h_x^{-1}-h^{-1})d_y\psi_1\bigr)
 \right|\leq Cx^2
 \left(|\nabla_h^2\psi_1|_h+|d_y\psi_1|_h\right)
 \leq Cx^2\psi_1.
\end{align*}
Elliptic regularity~\cite[Chapter 6]{GT} gives $\psi_1\in C^{2,\beta}(M)$ for every
$0<\beta<1$, since $h\in C^3$. The final inequality follows from this
regularity and $\min_M\psi_1>0$.

We next calculate the potential term. Since
$\partial_r\ln J=n+\partial_r\ln v$ and
$d_y\ln J=d_y\ln v$,
\begin{align}
 W(r,y)
 &=\frac12\partial_r^2\ln v
 +\frac14\left(n+\partial_r\ln v\right)^2\notag\\
 &\quad+\frac{x^2}{2}v^{-1}\operatorname{div}_h
 \left(vh_x^{-1}d_y\ln v\right)
 -\frac{x^2}{4}|d_y\ln v|_{h_x}^2\notag\\
 &=\frac{n^2}{4}
 +\frac{n-2}{4(n-1)}R_hx^2+O(x^3).
 \label{eq:radial-potential-expansion}
\end{align}
The radial terms are given by \eqref{eq:jets}. For the tangential terms,
Lemma~\ref{lem:jets} gives $d_y\ln v=O(x^2)$ and $D_y^2\ln v=O(x)$,
so
\[
 x^2v^{-1}\operatorname{div}_h(vh_x^{-1}d_y\ln v)=O(x^3),
 \qquad x^2|d_y\ln v|_{h_x}^2=O(x^6).
\]
This proves the last equality under the stated $C^3$ regularity.

Combining \eqref{eq:boundary-ground-state},
\eqref{eq:conjugated-operator}--\eqref{eq:radial-potential-expansion},
and $x^2=\varepsilon^2e^{-2s}$ gives
\begin{align}
 &J^{\frac12}(-\Delta_{g^+})(J^{-\frac12}F_R)\notag\\
 &=\left(-\varphi_\varepsilon''
 +\frac{n^2}{4}\varphi_\varepsilon\right)\psi_1
 +x^2\varphi_\varepsilon L_h\psi_1
 +O(x^3)\varphi_\varepsilon\psi_1\notag\\
 &=\left[-\varphi_\varepsilon''
 +\left(\frac{n^2}{4}
 +\varepsilon^2\mu_1e^{-2s}\right)\varphi_\varepsilon\right]\psi_1
 +O(x^3)F_R\notag\\
 &=\nu_\varepsilon F_R+O(x^3)F_R.
 \label{eq:Barta-residual}
\end{align}

Now define the actual test function
\[
 u_R:=J^{-\frac12}F_R.
\]
It belongs to $H_0^1(A_{R,\ell})$, is positive in $A_{R,\ell}$, and
has zero trace on both end faces. Since
$0<x=\varepsilon e^{-s}\leq\varepsilon$ throughout the layer,
\eqref{eq:Barta-residual} yields
\begin{equation}\label{eq:Barta-ratio}
 \left|\frac{-\Delta_{g^+}u_R}{u_R}-\nu_\varepsilon\right|
 \leq C\varepsilon^3
 \qquad\text{in }A_{R,\ell}.
\end{equation}
The factor $F_R=\varphi_\varepsilon\psi_1$ in the residual shows that
the quotient remains uniformly bounded up to the end faces from the
interior.

Using $u_R$ in the Rayleigh quotient and integrating by parts gives
\begin{align}
 \lambda_1(A_{R,\ell})
 &\leq
 \frac{\displaystyle\int_{A_{R,\ell}}|du_R|_{g^+}^2\,dv_{g^+}}
 {\displaystyle\int_{A_{R,\ell}}u_R^2\,dv_{g^+}}\notag\\
 &=\frac{\displaystyle\int_{A_{R,\ell}}
 u_R(-\Delta_{g^+}u_R)\,dv_{g^+}}
 {\displaystyle\int_{A_{R,\ell}}u_R^2\,dv_{g^+}}\notag\\
 &\leq\nu_\varepsilon+C\varepsilon^3.
 \label{eq:Barta-upper}
\end{align}
Barta's inequality and \eqref{eq:Barta-ratio} give the reverse bound
\begin{equation}\label{eq:Barta-lower}
 \lambda_1(A_{R,\ell})
 \geq\inf_{A_{R,\ell}}
 \frac{-\Delta_{g^+}u_R}{u_R}
 \geq\nu_\varepsilon-C\varepsilon^3.
\end{equation}
Consequently,
\begin{equation}\label{eq:Barta-comparison}
 \lambda_1(A_{R,\ell})=\nu_\varepsilon+O(\varepsilon^3).
\end{equation}
Combining \eqref{eq:nu-epsilon} and \eqref{eq:Barta-comparison}, and
using $\varepsilon=e^{-R}$, proves \eqref{eq:main} with the coefficient
\eqref{eq:main-coefficient}.
\end{itemize}

If $M$ is disconnected, apply the argument on each of its finitely many
components. The first eigenvalue of the layer and $\mu_1(L_h)$ are the
minima of the corresponding componentwise first eigenvalues. The
monotonicity of the first one-dimensional eigenvalue in $\mu_1$ and the
uniformity over finitely many components give the same conclusion.
Uniformity for $\ell$ in a compact subset of $(0,\infty)$ follows from
Step~1 and the uniform collar estimates.

Finally, since $\frac{\pi^2(1-e^{-2\ell})}{2\ell(\pi^2+\ell^2)}>0$,
Lemma~\ref{lem:sign} and \eqref{eq:main-coefficient} give
\[
 C_\ell\ge0\quad\Longleftrightarrow\quad\mu_1(L_h)\ge0
 \quad\Longleftrightarrow\quad Y(M,[h])\ge0,
\]
which is \eqref{eq:signintro}.
\end{proof}

\begin{remark}\label{rem:distance-layers}
For an ordinary distance $d=d_{g^+}(K,\cdot)$ from a nonempty compact
set $K$, consider the layers $A^d_{R,\ell}=\{R<d<R+\ell\}$. We do not
establish here whether, without further assumptions, they admit an expansion
\[
 \lambda_1(A^d_{R,\ell})
 =\frac{n^2}{4}+\frac{\pi^2}{\ell^2}
 +C'_\ell e^{-2R}+o(e^{-2R})
\]
with $C'_\ell\ge0$ equivalent to $Y(M,[h])\ge0$.
The coefficient $C'_\ell$, if it exists, need not equal $C_\ell$ for the
fixed representative $h$. For example, if $d=r+b$ near infinity for a
constant $b$, a shift of the layer parameter gives $C'_\ell=e^{2b}C_\ell$.
In hyperbolic space, taking $t=d_{g^+}(o,\cdot)$ and $x=2e^{-t}$ near
infinity gives $r=t-\ln2$ and hence $C'_\ell=4C_\ell$ for $d=t$.

If $d-r=o(x^2)$ uniformly near $M$, however, then
$|d-r|\leq\eta_R=o(e^{-2R})$ on the relevant layers. The inclusions
\[
 \{R+\eta_R<r<R+\ell-\eta_R\}
 \subset A^d_{R,\ell}
 \subset\{R-\eta_R<r<R+\ell+\eta_R\}
\]
and domain monotonicity, together with the uniform remainder for nearby
widths established in the proof of Theorem~\ref{thm:main}, give
\[
 \lambda_1(A^d_{R,\ell})
 =\frac{n^2}{4}+\frac{\pi^2}{\ell^2}
 +C_\ell e^{-2R}+o(e^{-2R}).
\]
Thus $C'_\ell=C_\ell$ under this additional assumption.
For point distances, Li--Qing--Shi~\cite[Lemmas 4.1--4.2, especially (4.5)]{LQS}
establish compactified Lipschitz control and a radial alignment estimate.
Writing $d_o=d_{g^+}(o,\cdot)$, their estimate and $d_o-r=O(1)$ give
\[
 \partial_r(d_o-r)
 =\langle\nabla d_o,\nabla r\rangle_{g^+}-1
 =O(e^{-2r})\quad\text{almost everywhere}.
\]
Integration along collar rays, together with the Lipschitz control, yields
\[
 d_o-r=b_o(y)+O(x^2),
 \qquad b_o\in W^{1,\infty}(M).
\]
For a compact $K$, taking the infimum over $o\in K$ gives the analogous
boundary function $b_K(y)=\inf_{o\in K}b_o(y)$, with uniform constants
on $K$. These estimates do not imply $d-r=o(x^2)$: the boundary
function need not vanish, and the $O(x^2)$ term can affect the
coefficient.
\end{remark}

\medskip
\noindent\textbf{Declaration of AI use.}
AI tools (ChatGPT-6 Astra) were used primarily to assist with the calculations for the counterexamples in Sections~\ref{sec:examples} and~\ref{sec:ricci-examples}, to supplement some details in the proofs of the main theorems, and to improve the typesetting and language. The author has checked the AI-assisted content and takes responsibility for it and for the article as a whole.

The two test functions in Remark~\ref{rem:alternative-ball-lower-bound} were constructed by AI and checked by the author. They give an alternative proof of Lemma~\ref{lem:truncated-lower-bounds} using Barta's inequality. The full calculations would add substantial detail to bounds already proved in Section~2.3. We therefore give only a brief account in the remark and omit the detailed proof.

\medskip
\noindent\textbf{Conflict of interest.}
The author declares that he has no conflict of interest.

\medskip
\noindent\textbf{Data availability.}
Data sharing is not applicable to this article as no datasets
were generated or analysed during the current study.

\bigskip
\noindent Xiaoshang Jin\\
School of Mathematics and Statistics, Huazhong University of Science and Technology,\\
Wuhan, Hubei 430074, China.\\
Email: \href{mailto:jinxs@hust.edu.cn}{jinxs@hust.edu.cn}\par
\end{document}